\theoremstyle{definition}
\newtheorem{thm}{Theorem}
\newtheorem{conj}[thm]{Conjecture}
\newtheorem{defn}[thm]{Definition}
\newtheorem{rem}[thm]{Remark}
\newtheorem{lem}[thm]{Lemma}
\newtheorem{prop}[thm]{Proposition}
\newtheorem{cor}[thm]{Corollary}
\newtheorem{example}[thm]{Example}
\numberwithin{thm}{section}
\numberwithin{defn}{section}
\numberwithin{rem}{section}
\numberwithin{conj}{section}
\numberwithin{lem}{section}
\numberwithin{cor}{section}
\numberwithin{example}{section}
\numberwithin{prop}{section}
\numberwithin{figure}{section}
\numberwithin{table}{section}
\author{Daniel Chen\affiliationmark{1}
  \and Sebastian Ohlig\affiliationmark{2}}
\title{Associated Permutations of Complete Non-Ambiguous Trees}
\affiliation{
  University of Cambridge, UK\\
  University of Oxford, UK}
\keywords{Complete Non-ambiguous Trees, Tree-like Tableaux, Tiered Trees, Occupied Corners, Permutations}
\begin{document}
\publicationdata{vol. 25:2}{2024}{24}{10.46298/dmtcs.11169}{2023-04-08; 2023-04-08; 2023-10-03}{2023-11-15}

\maketitle
\begin{abstract}
   We explore new connections between complete non-ambiguous trees (CNATs) and permutations. We give a bijection between tree-like tableaux and a specific subset of CNATs. This map is used to establish and solve a recurrence relation for the number of tree-like tableaux of a fixed size without occupied corners, proving a conjecture by Laborde-Zubieta. We end by establishing a row/column swapping operation on CNATs and identify new areas for future research.
\end{abstract}

\section{Introduction}

In this paper, we study complete non-ambiguous trees (CNATs), a combinatorial object defined by \cite{abs10}. Recent research has found a range of intriguing mathematical cross-connections, such as the bijection in \cite{dgg19} to fully-tiered trees of weight \begin{math}0\end{math}.

We will link CNATs to another combinatorial object, tree-like tableaux. Defined by \cite{abn13}, they have been found to have applications in the PASEP model of statistical mechanics (\cite{csw07}). In this context \cite{zub15} first analysed occupied corners, that is vertices of the tableaux that have no cells below them or to their right.

We bridge these two areas of research by giving an explicit bijection (\cref{thm:tltbij}), with the help of which we prove a formula for the number of tree like tableaux without occupied corners, \begin{math}a(n)\end{math}, that was conjectured in Remark 3.3 of \cite{zub15}. We verify that \begin{math}a(n)\end{math} is indeed given by:
\begin{displaymath}
    a(n)=\sum_{k=\lceil \frac{n-1}{2}\rceil}^n{ \binom{k+1}{n-k}(-1)^{n-k}k!}
\end{displaymath}

A more complete outline of our work follows below.

In \cref{sec:defns}, we introduce the notion of an associated permutation, matrix and leaf matrix of a given CNAT. Subsequently, we study properties of CNAT leaf matrices and derive a necessary and sufficient condition for a collection of vertices to be the set of leaves of at least one CNAT (\cref{thm:irred}). We then consider when a collection of vertices is the set of leaves of \emph{exactly} one CNAT, and we prove the striking result that there are exactly \begin{math}2^{n-2}\end{math} such sets (\cref{cor:totalunique}) for an \begin{math}n\times n\end{math} CNAT.

In \cref{sec:udcnms}, we focus on upper-diagonal CNATs, which are CNATs whose leaf matrix is anti-diagonal. We prove a bijection between these CNATs and permutations (\cref{thm:permbij}) and also a constructive bijection to tree-like tableaux (\cref{thm:tltbij}). This allows a recurrence relation to be established (\cref{def:f}, \cref{thm:recformula1}) which we go on to solve, proving the conjecture in Laborde-Zubieta's paper.

We also prove that any CNAT can be reduced to upper-diagonal form through a series of row and column swaps which keep it a valid CNAT at each step (\cref{thm:upperred}). Finally we provide suggestions for further research (\cref{sec:bnk}).

\section{CNATs and CNMs}

\subsection{Definitions}\label{sec:defns}
    
Consider the bidimensional grid \begin{math}\mathbb{N}\times\mathbb{N}\end{math}. For convenience, given any \begin{math}v=(x,y) \in \mathbb{N}\times\mathbb{N}\end{math} we define \begin{math}X(v)=x,\; Y(v)=y\end{math}.  Then the definition of a CNAT is recalled from \cite{abs10} as follows:

\begin{defn}\label{def:CNAT}
    A \emph{non-ambiguous tree} is a subset \begin{math}T \subset \mathbb{N}\times\mathbb{N}\end{math} satisfying the following constraints:

\begin{enumerate}
    \item (Existence of a root) \begin{math}(1,1)\in T\end{math}.
    \item (Non-ambiguity) \begin{math}\forall p\neq (1,1)\in T \end{math}, exactly one of the following is true:
    \subitem \begin{math}\exists\end{math} \begin{math}r\in T\end{math} s.t. \begin{math}X(r)=X(p)\end{math} and \begin{math}Y(r)<Y(p)\end{math}
    \subitem \begin{math}\exists\end{math} \begin{math}r'\in T\end{math} s.t. \begin{math}Y(r')=Y(p)\end{math} and \begin{math}X(r')<X(p)\end{math}

    In other words, each vertex other than the root has exactly one possible precursor. This motivates the adjective "non-ambiguous".
    
    \item (Minimality) If there exists a \begin{math}p\in T\end{math} s.t. \begin{math}X(p)=x\end{math} (resp. \begin{math}Y(p)=y\end{math}), then for all \begin{math}x'<x\end{math} (resp.  \begin{math}y'<y\end{math}) there is a \begin{math}p'\in T\end{math} s.t. \begin{math}X(p')=x'\end{math} (resp. \begin{math}Y(p')=y'\end{math}). Simply put, \begin{math}T\end{math} contains no empty rows or columns.
\end{enumerate}

If \begin{math}T\end{math} also satisfies the condition that every vertex has zero or two children, then it is referred to as a \emph{complete non-ambiguous tree} (CNAT). For convenience, we will display CNATs with the root vertex in the upper left corner.
\end{defn}

\begin{figure}[H]
    \centering
    \includegraphics[width=40mm]{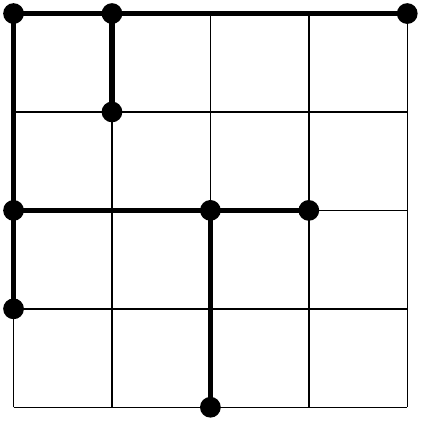}
    \caption{A complete non-ambiguous tree}
\end{figure}

By connecting each vertex to its precursor in the same row or column, every vertex except for the root has a unique parent, resulting in a binary tree structure rooted at \begin{math}(0,0)\end{math}. This implies that a CNAT with \begin{math}2n-1\end{math} vertices has \begin{math}n\end{math} leaves and \begin{math}n-1\end{math} internal vertices. Furthermore, every row and column has exactly one leaf, thus a CNAT with \begin{math}2n-1\end{math} vertices is contained in an \begin{math}n \times n\end{math} grid. We refer to the CNAT as having size \begin{math}n\end{math}, or dimensions \begin{math}n \times n\end{math}.

\begin{defn}[CNMs]
     For a complete non-ambiguous tree \begin{math}T\end{math}, a \emph{complete non-ambiguous matrix} (CNM) is the unique representation of \begin{math}T\end{math} as a binary matrix \begin{math}A = (a_{i,j})\end{math} defined by the relation:
    \begin{displaymath}
    a_{i,j} = 
    \begin{cases}
        1 & \text{if } (i,j) \in T\\
        0 & \text{otherwise}
    \end{cases}
    \end{displaymath}
    
We shall refer to a given \begin{math}a_{i,j} = 1\end{math} as a \emph{vertex} of the CNM and call it a \emph{leaf} (resp. \emph{internal vertex}) if the corresponding vertex \begin{math}(i,j)\end{math} is a leaf (resp. internal vertex) in \begin{math}T\end{math}.
\end{defn}

\begin{figure}[H]
\centering
    \begin{subfigure}[b]{0.5\textwidth}
        \centering
        \includegraphics{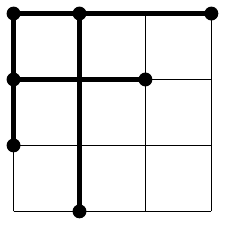}
        \caption{A \begin{math}4 \times 4\end{math} CNAT \begin{math}T\end{math}}
    \end{subfigure}%
    \begin{subfigure}[b]{0.5\textwidth}
        \centering
        \includegraphics{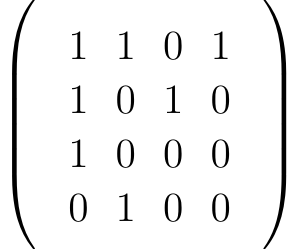}
        \caption{The CNM of \begin{math}T\end{math}}
    \end{subfigure}
    \caption{A CNAT and its associated matrix}
\end{figure}

CNATs and CNMs are in clear bijection with each other, but operating with CNMs allows to us to consider various matrix statistics and operations, leading to interesting results.

\begin{defn}[Leaf Matrices]
\label{def:pM}
For a given CNM \begin{math}M\end{math}, the \emph{leaf matrix} of \begin{math}M\end{math}, denoted by \begin{math}p(M)\end{math}, is the matrix obtained by setting all vertices but the leaves to zero in \begin{math}M\end{math}.
\end{defn}

Since there is exactly one leaf in every row and column, \begin{math}p(M)\end{math} always represents a permutation matrix. Interpreting this matrix as a permutation of the letters \begin{math}\{1,2,\dots n\}\end{math}, we obtain a permutation \begin{math}\pi(M)\in S_n\end{math}, which we refer to as the \emph{associated permutation}.
 
\begin{figure}[H]
\centering
    \begin{subfigure}{0.5\textwidth}
        \centering
        \includegraphics{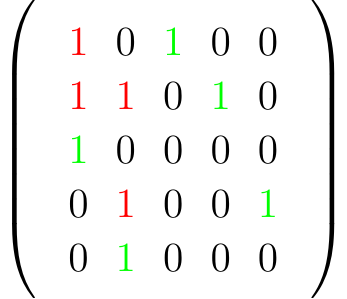}
    \end{subfigure}%
    \begin{subfigure}{0.5\textwidth}
        \centering
        \includegraphics{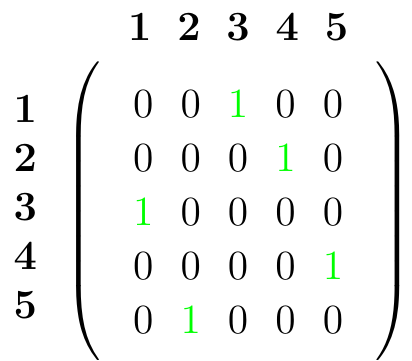}
    \end{subfigure}
    \caption{A CNM (left) and its leaf matrix (right)}
    \label{fig:cnmleaf}
\end{figure}

\begin{example}
Consider the matrix \begin{math}M\end{math} in \cref{fig:cnmleaf} above, with leaves coloured green and internal vertices coloured red. Its leaf matrix \begin{math}p(M)\end{math} discards the internal vertices of \begin{math}M\end{math}. By numbering the rows and columns from \begin{math}1\end{math} to \begin{math}5\end{math}, it can be seen that the associated permutation is \begin{math}\pi(M) = (13)(254)\end{math}, written in cycle notation.
\end{example}

\subsection{Determinants}
Given these preliminary definitions, we may formulate our first result.

\begin{prop}\label{prop:det}
The determinant of every complete non-ambiguous matrix \begin{math}M\end{math} is \begin{math}\text{sgn}(\pi(M))\end{math}, where \begin{math}\text{sgn}:S_n\to \{1,-1\}\end{math} is the sign of the permutation.
\end{prop}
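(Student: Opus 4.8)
The plan is to prove $\det M = \text{sgn}(\pi(M))$ by induction on the size $n$ of the CNAT $T$ underlying $M$. The base case $n=1$ is immediate, since then $M=(1)$ and $\pi(M)$ is the identity. For the inductive step I will use a \emph{cherry reduction}: choose an internal vertex $v$ of maximal depth in the binary tree structure of $T$, so that both of its children are leaves. Because the non-ambiguity axiom forbids a vertex from having two children on the same side (two column-children $w_1,w_2$ with $Y(w_1)<Y(w_2)$ would force the parent of $w_2$ to be $w_1$, not $v$, and similarly for rows), $v$ has exactly one leaf row-child $\ell_r$ (directly right of $v$ in row $Y(v)$) and one leaf column-child $\ell_c$ (directly below $v$ in column $X(v)$).

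The key structural observation is that by non-ambiguity $v$ has at most one precursor direction, so at least one of the following holds: either row $Y(v)$ consists of exactly $\{v,\ell_r\}$ (this occurs precisely when $v$ has no row-precursor, i.e.\ when $v$'s parent lies above it in its column, or when $v$ is the root), or symmetrically column $X(v)$ consists of exactly $\{v,\ell_c\}$. Assume the first case; the second follows by transposing, using $\det M=\det M^{T}$. In this case one also checks that $\ell_r$ is the only vertex of $T$ in column $X(\ell_r)$, since its parent $v$ is a row-precursor so $\ell_r$ has no column-precursor, and being a leaf it has no column-child. Consequently, deleting row $Y(v)$ and column $X(\ell_r)$ from $M$ removes precisely the two $1$-entries corresponding to $v$ and $\ell_r$, and the resulting $0$-$1$ matrix $M''$ is the CNM of $T'':=T\setminus\{v,\ell_r\}$, a CNAT of size $n-1$: in $T''$ the leaf $\ell_c$ takes over $v$'s former role as the column-child of $v$'s parent, completeness, non-ambiguity and minimality are preserved, and no further row or column becomes empty.

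Now I run two cofactor expansions along lines carrying a single $1$. Column $X(\ell_r)$ of $M$ has its only nonzero entry in row $Y(v)$, so expanding along it gives $\det M=(-1)^{Y(v)+X(\ell_r)}\det M''$. On the other hand, the leaves of $T''$ are exactly the leaves of $T$ with $\ell_r=(X(\ell_r),Y(v))$ removed, so $\pi(M'')$ is obtained from $\pi(M)$ by deleting the pair consisting of row $Y(v)$ and column $X(\ell_r)$; applying the same single-entry cofactor expansion to the permutation matrix $p(M)$ yields $\text{sgn}(\pi(M''))=(-1)^{Y(v)+X(\ell_r)}\text{sgn}(\pi(M))$. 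Combining these with the inductive hypothesis $\det M''=\text{sgn}(\pi(M''))$ and $(-1)^{2(Y(v)+X(\ell_r))}=1$ gives $\det M=\text{sgn}(\pi(M))$, completing the induction.

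I expect the main obstacle to be the CNAT bookkeeping rather than the linear algebra: one must verify carefully from \cref{def:CNAT} that a maximal-depth internal vertex is a cherry with one child on each side, that (up to transposition) its row contains only $v$ and $\ell_r$ while $\ell_r$'s column contains only $\ell_r$, and that excising $v$ and $\ell_r$ leaves a genuine CNAT with the expected leaf set. Once those facts are established, the determinant and sign computations reduce to two routine Laplace expansions. An alternative approach — pairing up, via a sign-reversing involution, the non-leaf permutations that contribute to the Leibniz expansion of $\det M$ — looks plausible but messier, so I would pursue the inductive argument above.
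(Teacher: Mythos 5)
Your proof is correct, but it takes a genuinely different route from the paper's. You induct on $n$, locating a deepest internal vertex $v$ (a cherry with exactly one row-child and one column-child, as you correctly argue from non-ambiguity), showing that one of the two lines through $v$ contains only $v$ and the corresponding leaf child $\ell$, that $\ell$'s other line contains only $\ell$, and that excising $v$ and $\ell$ (together with the two resulting empty lines) leaves a CNAT of size $n-1$ whose leaves are those of $T$ minus $\ell$; two single-entry cofactor expansions, one for $M$ and one for $p(M)$, then contribute the same factor $(-1)^{Y(v)+X(\ell)}$ to both sides and the induction closes. These structural claims all check out; the only detail worth making explicit in the transposed case is that $\mathrm{sgn}(\pi(M^{T}))=\mathrm{sgn}(\pi(M))$, which holds because $p(M^{T})=p(M)^{T}$ and a permutation and its inverse have equal sign. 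The paper instead works at fixed size $n$: non-ambiguity gives every \emph{leaf} a private row or column containing no other vertex of the tree, and adding multiples of that line to the other lines meeting the leaf's perpendicular line clears all internal vertices in one pass without changing the determinant, reducing $M$ directly to $p(M)$, whose determinant is $\mathrm{sgn}(\pi(M))$ by definition. The paper's clearing argument is shorter and never has to re-verify the CNAT axioms after a reduction; your cherry excision costs more combinatorial bookkeeping but yields a recursive decomposition of CNATs (peel off a deepest cherry) that could be reusable elsewhere, and \cref{thm:upperred} later supplies yet a third proof via row/column swaps to upper-diagonal form.
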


\begin{proof}
   We consider the set of leaves in \begin{math}M\end{math} given by the matrix \begin{math}p(M)\end{math}. 
   By the non-ambiguity condition, for every leaf \begin{math}m_{i,j}\end{math} of \begin{math}M\end{math}, either the column of \begin{math}M_{\cdot,j}\end{math} or the row \begin{math}M_{i,\cdot}\end{math} is zero except for the vertex \begin{math}m_{i,j}\end{math}. Without loss of generality, assume it is the row.
   We add \begin{math}-M_{i,\cdot}\end{math} to all rows \begin{math}M_{k,\cdot}\end{math} in which \begin{math}m_{k,j}=1\end{math}, resulting in a matrix \begin{math}M'\end{math}. Since row operations do not affect the determinant, \begin{math}\det(M')=\det(M)\end{math}. Note that after these operations are finished, the matrix \begin{math}M'\end{math} has the same determinant as \begin{math}M\end{math}, but \begin{math}m_{i,j}\end{math} is the only non-zero vertex in \begin{math}M_{\cdot, j}\end{math} and \begin{math}M_{i,\cdot}\end{math}.
   
   After applying multiple such clearing operations, only the leaf matrix \begin{math}p(M)\end{math} remains in the structure. Its determinant, being the permutation matrix representing \begin{math}\pi\end{math} by definition, is exactly the value of \begin{math}\text{sgn}(\pi(M))\end{math}.
   
\end{proof}

\begin{figure}[H]
    \centering
    \begin{subfigure}{.4\textwidth}
        \centering
        \includegraphics{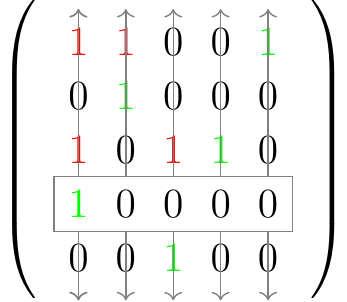}
        \subcaption{\begin{math}M\end{math}}
        \label{fig:pM}
    \end{subfigure}%
    \begin{subfigure}{.08\textwidth}
        \includegraphics{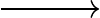}
    \end{subfigure}%
    \begin{subfigure}{.4\textwidth}
        \centering
        \includegraphics{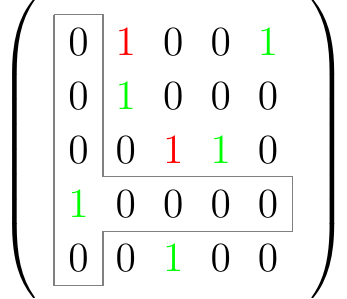}
        \subcaption{The matrix \begin{math}M'\end{math}}
        \label{fig:CNMleaf}
    \end{subfigure}
    
    \caption{The clearing process}
    \label{fig:test}
\end{figure}

Numerical exploration suggests the following result, which we have thus far been unable to prove.

\begin{conj}\label{conj:detparity}
    Let \begin{math}T(n)\end{math} be the number of \begin{math}n \times n\end{math} CNATS. If \begin{math}n>1\end{math} is odd, then there are \begin{math}\frac{T(n)}{2}\end{math} with determinant 1, and \begin{math}\frac{T(n)}{2}\end{math} with determinant -1.
\end{conj}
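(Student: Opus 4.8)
The plan is to prove the stronger identity $S(n):=\sum_{T\in\mathcal C_n}\text{sgn}(\pi(T))=0$ for odd $n>1$, where $\mathcal C_n$ is the set of all $n\times n$ CNATs; since $|\mathcal C_n|=T(n)$, this yields the asserted equal split (and, as a byproduct, that $T(n)$ is even). By \cref{prop:det}, $S(n)=\sum_{T}\det(M)$, so it suffices to understand how $\text{sgn}(\pi)$ behaves under a structural recursion.

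The engine is the recursive decomposition of a CNAT $T$ of size $n\ge 2$. Since $n>1$ the root $(1,1)$ is internal, so deleting it leaves its \emph{down-subtree} $L$ (rooted at the root's child in column~$1$) and its \emph{right-subtree} $R$ (rooted at the root's child in row~$1$), of sizes $a,b\ge1$ with $a+b=n$. Every vertex of $L$ lies in rows $\ge 2$ and every vertex of $R$ in columns $\ge 2$; column~$1$ consists of the root together with the vertices of $L$ in that column, and row~$1$ of the root together with the vertices of $R$ in that row. One checks that the non-ambiguity and completeness conditions for the vertices of $L$ and $R$ hold in $T$ verbatim as they do in $L$ and $R$, so $T$ is recovered \emph{freely} from a quadruple $\bigl(L,R,w_{\mathrm{row}},w_{\mathrm{col}}\bigr)$ with $L\in\mathcal C_a$, $R\in\mathcal C_b$, a word $w_{\mathrm{row}}\in\{L,R\}^{n-1}$ with exactly $a$ letters $L$ (recording which of rows $2,\dots,n$ belong to $L$), and a word $w_{\mathrm{col}}\in\{L,R\}^{n-1}$ with exactly $a-1$ letters $L$ (recording which of columns $2,\dots,n$ belong to $L$). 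As a check, this gives $T(n)=\sum_{a+b=n}\binom{n-1}{a}\binom{n-1}{a-1}T(a)T(b)$, reproducing the known values $T(1),\dots,T(5)=1,1,4,33,456$.

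Next I would track the sign. Factoring $\pi(T)$ as the reordering that lists all $L$-columns before all $R$-columns, then the block-diagonal permutation $\pi(L)\oplus\pi(R)$, then the reordering that lists all $L$-rows before all $R$-rows, one obtains
\[
  \text{sgn}\bigl(\pi(T)\bigr)=(-1)^{a}\,(-1)^{\mathrm{inv}(w_{\mathrm{row}})}\,(-1)^{\mathrm{inv}(w_{\mathrm{col}})}\,\text{sgn}\bigl(\pi(L)\bigr)\,\text{sgn}\bigl(\pi(R)\bigr),
\]
where $\mathrm{inv}(w)$ counts pairs of positions carrying an $R$ before an $L$; the lone factor $(-1)^{a}$ records the asymmetry that row~$1$ is an $R$-row lying above all $a$ of the $L$-rows, whereas column~$1$ is an $L$-column lying to the left of all $R$-columns. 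Summing over all decomposition data the four factors separate, giving
\[
  S(n)=\sum_{a+b=n}(-1)^{a}\,S(a)\,S(b)\,\binom{n-1}{a}_{-1}\binom{n-1}{a-1}_{-1},
\]
with $\binom{m}{k}_{-1}=\sum_{w}(-1)^{\mathrm{inv}(w)}$ summed over words with $k$ letters $L$ and $m-k$ letters $R$, i.e.\ the Gaussian binomial coefficient at $q=-1$. I then invoke the classical evaluation, which in particular gives $\binom{m}{k}_{-1}=0$ exactly when $k$ and $m-k$ are both odd. For odd $n$, the relation $a+b=n$ forces one of $a,b$ even and the other odd: when $a$ is odd (so $b$ even) the factor $\binom{n-1}{a}_{-1}$ has both $a$ and $n-1-a=b-1$ odd, hence vanishes; when $a$ is even (so $b$ odd) the factor $\binom{n-1}{a-1}_{-1}$ has both $a-1$ and $n-a=b$ odd, hence vanishes. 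Thus every summand is $0$, so $S(n)=0$.

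The work, while not deep, lies in two places. First, one must verify that the decomposition is genuinely free, i.e.\ that no hidden constraint couples $L$, $R$, $w_{\mathrm{row}}$ and $w_{\mathrm{col}}$; the non-ambiguity part is automatic because the columns (resp.\ rows) used by $L$ are disjoint from those used by $R$, so precursors are unchanged, and it is the completeness condition that must be checked with care. Second, one must get the parity bookkeeping in the sign formula exactly right, especially the origin of the solitary $(-1)^{a}$; after that the $q=-1$ evaluation does all the remaining work. These are the only places requiring care, and I do not anticipate a serious obstacle. (For even $n$ some of the coefficients $(-1)^{a}\binom{n-1}{a}_{-1}\binom{n-1}{a-1}_{-1}$ survive and $S(n)$ is generally nonzero, so the restriction to odd $n$ is essential.)
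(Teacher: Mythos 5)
The paper does not prove this statement: it is \cref{conj:detparity}, which the authors explicitly describe as something they ``have thus far been unable to prove'' and list as still open in their concluding remarks, so the only thing to compare your argument against is their numerical table. Your argument appears to be correct and would settle the conjecture. The root-deletion decomposition you use is precisely the one behind the recurrence $T(n)=\sum_{a+b=n}\binom{n-1}{a}\binom{n-1}{a-1}T(a)T(b)$ that the paper already invokes (Proposition 9 of \cite{abs10}) to deduce that $T(n)$ is even for odd $n>1$; your contribution is to push the sign through that same recurrence. Both points you flag check out. Freeness of the reassembly holds because every row $r\ge 2$ is a left-to-right chain hanging from its leftmost vertex and hence lies wholly in $L$ or wholly in $R$ (likewise for columns $\ge 2$), while column $1$ carries only the root and $L$-vertices and row $1$ only the root and $R$-vertices, so non-ambiguity, completeness and minimality pass back and forth without constraint. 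The sign formula follows cleanly from the factorisation $\pi(T)=\rho_{\mathrm{row}}\circ(\pi(L)\oplus\pi(R))\circ\rho_{\mathrm{col}}^{-1}$ with $\rho_{\mathrm{row}},\rho_{\mathrm{col}}$ the order-preserving shuffles; the isolated $(-1)^a$ is exactly the $a$ inversions created by the forced $R$ in position $1$ of the row word, while the forced $L$ in position $1$ of the column word creates none. Decisively, your signed recurrence $S(n)=\sum_{a+b=n}(-1)^aS(a)S(b)\binom{n-1}{a}_{-1}\binom{n-1}{a-1}_{-1}$ reproduces the paper's computed row $A-B$ exactly, including the nontrivial even entries: $S(2)=-1$, $S(4)=1$, $S(6)=-4$, $S(8)=33$. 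Since $\binom{m}{k}_{-1}=0$ precisely when $k$ and $m-k$ are both odd, for odd $n$ one of the two Gaussian factors vanishes in every term (the row factor when $a$ is odd, the column factor when $a$ is even), giving $S(n)=0$ and hence the equal split via \cref{prop:det}. When writing this up, spell out the completeness check for the reassembly (the root of $L$ is the topmost vertex of column $1$ of $L$, so it acquires the root of $T$ as parent without disturbing any other parent--child relation) — that is the one place a reader will demand full detail.
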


We computed the following data:
\begin{center}
\begin{tabular}{ c|c|c|c|c|c|c|c|c } 
\begin{math}n\end{math} & 1 & 2 & 3 & 4 & 5 & 6 & 7 & 8\\
\hline
\text{\# with det 1, A} & 1 & 0 & 2 & 17 & 228 & 4728 & 137400 & 5321889\\
\text{\# with det -1, B} & 0 & 1 & 2 & 16 & 228 & 4732 & 137400 & 5321856\\
\begin{math}T(n) = A+B\end{math} & 1 & 1 & 4 & 33 & 456 & 9460 & 274800 & 10643745\\
\begin{math}A-B\end{math} & 1 & -1 & 0 & 1 &  0 & -4 & 0 & 33\\
\end{tabular}
\end{center}

This data verifies the conjecture for \begin{math}n = 3,5,7\end{math}. It is known that \begin{math}T(n)\end{math} is related to the Bessel function \cite{abn13}, and is sequence A002190 in \cite{oeis}. Furthermore, when \begin{math}n \neq 1\end{math} is odd, \begin{math}T(n)\end{math} is even, because \begin{math}T(n)\end{math} (mod 2) is the characteristic function for powers of 2. This can be seen by the recurrence for $T(n)$ given in Proposition 9 of \cite{abs10}.

\begin{rem}
    The even terms of the sequence \begin{math}A-B\end{math} in the above table are \begin{math}-1, 1, -4, 33\end{math} which seem to be an alternating version of \begin{math}T(n)\end{math}.
\end{rem}

\section{Leaf Matrices}

In this section we will prove results about CNATs arising from specific leaf matrices, recalling  \cref{def:pM}.

\begin{defn}
A permutation matrix with associated permutation \begin{math}\sigma\end{math} on \begin{math}\{1,\dots, n\}\end{math} is \emph{irreducible} if there is no \begin{math}j\end{math} with \begin{math}1<j<n\end{math} such that \begin{math}\{1,\dots,j\}\end{math} is mapped to itself when applying \begin{math}\sigma\end{math}.
\end{defn}

\begin{rem}
    Sequence A003319 in \cite{oeis} counts the number of irreducible permutations of \begin{math}\{1\dots n\}\end{math}.
\end{rem} 

\begin{thm}[Irreducibility]
\label{thm:irred}
    A permutation matrix is the leaf matrix of some CNM if and only if it is irreducible.
\end{thm}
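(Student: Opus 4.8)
The plan is to prove both directions by induction on $n$, exploiting the recursive structure of CNATs coming from the root and its two subtrees. First recall that the root $(1,1)$ of a CNAT $T$ has exactly two children: one sharing its row (the ``horizontal child'') and one sharing its column (the ``vertical child''). Removing the root splits $T$ into two CNATs, $T_1$ living in the block of rows/columns reachable through the vertical child and $T_2$ in the block reachable through the horizontal child — and crucially, because of the non-ambiguity and minimality conditions, the row-index set and column-index set of $T_1$ partition $\{1,\dots,n\}$ together with those of $T_2$, but the two subtrees interleave: $T_1$ occupies a contiguous top block of rows and $T_2$ the complementary bottom block, while in the columns it is the other way round (or vice versa). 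This ``crossing'' of the two blocks is exactly what will force irreducibility.

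\textbf{($\Rightarrow$) Leaf matrix of a CNM implies irreducible.} Suppose $M$ is a CNM of size $n>1$ with leaf matrix giving permutation $\sigma$, and suppose for contradiction that $\{1,\dots,j\}$ is $\sigma$-invariant for some $1<j<n$. Then the leaf matrix is block-diagonal with an upper-left $j\times j$ block and a lower-right $(n-j)\times(n-j)$ block. I would argue that no vertex of $M$ — leaf or internal — can lie in the off-diagonal blocks: an internal vertex $(i,k)$ with $i\le j<k$ would need a descendant leaf in its row $i$ and one in its column $k$; tracing the tree downward, every vertex in the subtree hanging off $(i,k)$ via its horizontal child stays in columns $>j$, and via its vertical child stays in rows... here one needs the precise propagation lemma, but the upshot is that a vertex straddling the block boundary forces a leaf on the wrong side of the boundary, contradicting block-diagonality of the leaf matrix. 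Hence $M$ itself is block-diagonal; but then the upper-left $j\times j$ block is a CNM with root $(1,1)$, and the lower-right block is a CNM with no entry in its first row/column adjacent to a root — it has no root at all, contradicting the root axiom. So $\sigma$ is irreducible.

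\textbf{($\Leftarrow$) Irreducible implies it is a leaf matrix.} This is the constructive direction and I expect it to be the main obstacle. Given an irreducible permutation matrix $P$ of size $n$, I want to build a CNAT whose leaves are exactly the $1$-entries of $P$. The natural approach is greedy/recursive: place the root at $(1,1)$; since $P$ is irreducible and $n>1$, the $1$ in row $1$ is in some column $c>1$ and the $1$ in column $1$ is in some row $r>1$. Use $(1,c)$ as the horizontal child and $(r,1)$ as the vertical child of the root, then fill in internal vertices to connect everything. The subtlety is that after committing to the root's two children, the remaining leaves must be partitioned into two groups, each of which must again form (after reindexing) an irreducible permutation matrix in its own block, so that induction applies. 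Here I would use irreducibility of $P$ to produce the \emph{first} index $j$ (if any) where a partial block closes up, split there, and check that both pieces are irreducible — or, if no such proper split exists because $P$ is ``maximally tangled,'' handle that base-type case directly by an explicit construction of the needed internal vertices (a zig-zag of internal vertices along the boundary). The bookkeeping that the completeness condition (every vertex has $0$ or $2$ children) and the minimality condition (no empty rows/columns) are preserved at each recursive step is where the real work lies; I would isolate it as a lemma: \emph{given a permutation matrix $P$ and a proposed assignment of a root with two children compatible with $P$, the construction closes up to a valid CNAT iff each induced sub-permutation is irreducible}. Granting that lemma, the theorem follows by induction on $n$, with $n=1$ (the single vertex) and $n=2$ as base cases.

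The crux, then, is the propagation/partition lemma describing how the row-set and column-set of each subtree of a CNAT sit inside $\{1,\dots,n\}$ as crossing contiguous blocks; everything else is induction and careful but routine checking of the three CNAT axioms.
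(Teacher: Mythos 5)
Your forward direction is essentially the paper's argument and is fine: a $\sigma$-invariant prefix $\{1,\dots,j\}$ makes the upper-left $j\times j$ block of the leaf matrix a permutation submatrix of leaves, and since every internal vertex of a CNAT has exactly one horizontal and one vertical child, a vertex in an off-diagonal block would propagate (by following horizontal, resp.\ vertical, children) to a second leaf in some row or column of $\{1,\dots,j\}$; hence $M$ is block-diagonal and the tree is disconnected. The ``propagation lemma'' you gesture at is a one-line induction down the tree, so no real gap there.

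The backward direction, however, rests on a structural claim that is false, and the construction you defer is exactly the hard part. It is not true that deleting the root splits a CNAT into two subtrees occupying complementary \emph{contiguous} blocks of rows and columns: take the $4\times4$ CNAT with internal vertices at positions $(1,1),(1,2),(2,1)$ and leaves at $(1,4),(2,3),(3,2),(4,1)$ (the anti-diagonal permutation, which is irreducible). Its two root-subtrees occupy rows $\{1,3\}$/columns $\{2,4\}$ and rows $\{2,4\}$/columns $\{1,3\}$ --- fully interleaved. So your plan to ``produce the first index $j$ where a partial block closes up, split there, and recurse'' never gets off the ground; the ``maximally tangled'' case you propose to handle by an unspecified zig-zag is the generic case (indeed it includes every upper-diagonal CNM), and the lemma you isolate as granted is precisely the theorem's content. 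The paper sidesteps the root decomposition entirely: it builds the tree from the bottom-right, joining the rightmost and bottom-most leaves (distinct by irreducibility) under a new internal vertex $p$, attaching each leaf strictly enclosed by that corner to $p$'s row or column (either choice works), and then proving that the surviving entries together with $p$ form a strictly smaller \emph{irreducible} permutation matrix, so that induction applies. You would need either to adopt such a corner-based reduction or to prove a correct (and necessarily non-interval) description of how the two root-subtrees' row and column sets interleave; as written, the $(\Leftarrow)$ direction is not established.
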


\begin{proof}
\begin{math}\left(  \Rightarrow \right)\end{math} We assume that there exists a CNM \begin{math}M\end{math} with \begin{math}p(M)\end{math} not irreducible and will arrive at a contradiction.

By definition, there exists a \begin{math}j<n\end{math} such that the set \begin{math}\{1\dots j\}\end{math} maps to itself; thus in \begin{math}p(M)\end{math}, the submatrix of the first \begin{math}j\end{math} rows and columns is a permutation matrix \begin{math}M'\end{math}. But then there can be no vertices below or to the right of \begin{math}M'\end{math} in \begin{math}M\end{math}, because the vertices in \begin{math}M'\end{math} represent leaves. This means that the tree represented by \begin{math}M\end{math} is disconnected, contradiction.

\begin{figure}[H]
    \centering
    \includegraphics{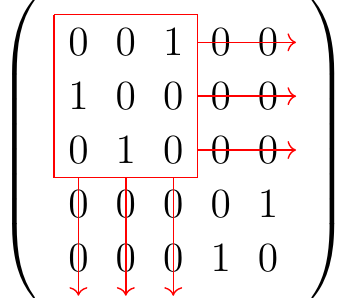}
    \caption{A permutation submatrix has no vertices below or to its right.}
    \label{fig:submatrix}
\end{figure}

\begin{math}\left( \Leftarrow \right)\end{math} We will prove by induction that every irreducible permutation matrix of size \begin{math}k\end{math} is the leaf matrix of at least one CNM of size \begin{math}k\end{math}. Note that the case \begin{math}k=2\end{math} is clear.
 
Fix \begin{math}k>2\end{math} and suppose that for \begin{math}2 \leq j < k\end{math} the statement holds. We will prove it holds true for \begin{math}k\end{math}. Fix a permutation matrix \begin{math}M\end{math} of size \begin{math}k\end{math} which is irreducible; we will add points to \begin{math}M\end{math} to construct a CNM whose leaf matrix is the original permutation matrix and be done. Initially, label all \begin{math}k^2\end{math} entries of \begin{math}M\end{math} as unmarked.

Consider the vertices \begin{math}l_x, l_y\end{math} in \begin{math}M\end{math} with maximal \begin{math}x,y\end{math} coordinate respectively. Since \begin{math}M\end{math} is irreducible, \begin{math}l_x\end{math} and \begin{math}l_y\end{math} are distinct.

Add the unmarked point \begin{math}p = (X(l_y), Y(l_x))\end{math} to \begin{math}M\end{math}, and label all entries in the column containing \begin{math}l_x\end{math} and row containing \begin{math}l_y\end{math} as marked. Note that at this point, the unmarked entries form a permutation submatrix. Then, for all leaves \begin{math}v_i \in M\end{math} with \begin{math}Y(v_i) > Y(l_x)\end{math} and \begin{math}X(v_i) > X(l_y)\end{math}, we mark all entries in the column and row containing \begin{math}v_i\end{math}, and freely choose to add either one of the (marked) points \begin{math}(X(v_i), Y(l_x))\end{math} or \begin{math}(X(l_y), Y(v_i))\end{math}. In other words, we choose to connect each \begin{math}v_i\end{math} either to the left or upwards. The unmarked entries still form a permutation submatrix, because we repeatedly marked rows and columns whose intersection is a vertex of \begin{math}M\end{math}.

Call this permutation submatrix of unmarked entries \begin{math}M'\end{math}, which has size strictly smaller than \begin{math}M\end{math}. Furthermore, \begin{math}M'\end{math} contains \begin{math}p\end{math}, and in \begin{math}M'\end{math} there are no unmarked vertices below and to the right of \begin{math}p\end{math}, because all \begin{math}v_i\end{math} were marked. We also claim that \begin{math}M'\end{math} is irreducible, the proof of which follows below.

Assume for sake of contradiction that \begin{math}M'\end{math} is not irreducible. Then by definition, it has some submatrix \begin{math}S\end{math} that is its own permutation matrix, and is not the entirety of \begin{math}M'\end{math}. Note that the point \begin{math}p\end{math} defined above is not vertically below or horizontally to the right of \begin{math}S\end{math} (see \cref{fig:submatrix}), so it must either be inside \begin{math}S\end{math}, or below \emph{and} to the right of (i.e. diagonally below) \begin{math}S\end{math}. If \begin{math}p\end{math} is in \begin{math}S\end{math}, then every position diagonally below \begin{math}S\end{math} is also diagonally below \begin{math}p\end{math}, so there are no unmarked vertices outside of \begin{math}S\end{math}, implying that \begin{math}S\end{math} is the entirety of \begin{math}M'\end{math}, contradiction. If \begin{math}p\end{math} is diagonally below \begin{math}S\end{math}, then \begin{math}S\end{math} was unchanged by all of the markings, so \begin{math}M\end{math} has the same submatrix \begin{math}S\end{math}, contradicting the fact that \begin{math}M\end{math} is irreducible.

Thus, by inductive hypothesis, \begin{math}M'\end{math} is the leaf matrix of some valid CNM. After adding the required points to \begin{math}M\end{math} that turn \begin{math}M'\end{math} into a CNM, we see that \begin{math}M\end{math} also becomes a valid CNM, because \begin{math}p\end{math} becomes the parent of the leaves \begin{math}l_x\end{math} and \begin{math}l_y\end{math} in \begin{math}M\end{math}, and each \begin{math}v_i\end{math} becomes a leaf. Thus the leaf matrix of \begin{math}M\end{math} is equal to the initial permutation matrix, as desired.

\end{proof}

\begin{rem}
    In the above proof, we used induction where the connection of the rightmost and bottom-most vertices reduced the problem to a smaller case. This is equivalent to an iterative argument, where at each iteration we perform the aforementioned connection of the rightmost and bottom-most vertices, and the connection of the \begin{math}v_i\end{math}. Thus we will refer to this as a "step" of the construction in the proof of sufficiency of \cref{thm:irred}.
\end{rem}

\begin{example}
\cref{fig:cnatconstruction} below illustrates one step of the construction process.

Any new vertices added are coloured red. We connect the rightmost and bottom-most vertices, then any vertices enclosed by this connection (denoted \begin{math}v_i\end{math} in the above proof, coloured blue) can connect to the left or upwards. In the given example, the connection occurs upwards. After removing rows and columns we obtain a smaller permutation matrix (highlighted in gray) which is irreducible, illustrating how the inductive argument yields the desired result.
\end{example}

\begin{figure}[H]
    \centering
    \includegraphics{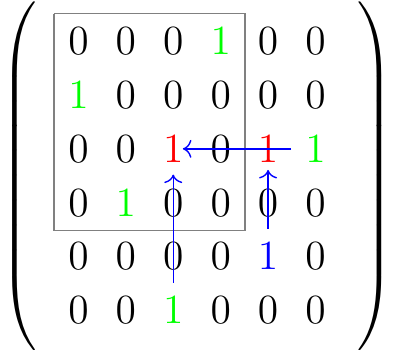}
    \caption{One step of the construction process}
    \label{fig:cnatconstruction}
\end{figure}

\begin{rem}\label{rem:twochoices}
    In the above proof of sufficiency, each \begin{math}v_i\end{math} has two possible connection choices (left or upwards). We may choose among these freely.
\end{rem}

\begin{defn}[L-subset]\label{def:Lset}
    For a matrix \begin{math}M\end{math} and \begin{math}k>1\end{math}, we define the \\ \begin{math}k^{th}\end{math}\emph{L-subset} to be:
    \begin{displaymath}
         \{m_{ij} \in M \mid \left(\left(i=k\right)\land \left(j\leq k\right)\right)\,\lor\, \left(\left(j=k\right)\land\, \left(i\leq k\right)\right)\}
    \end{displaymath}
    
\end{defn}
\begin{figure}[H]
    \centering
    \includegraphics{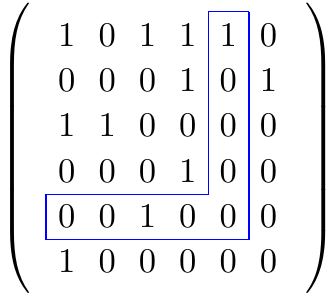}
    \caption{The fifth L-subset in a \begin{math}6\times 6\end{math} CNM}
    \label{fig:Lsetdemo}
\end{figure}

\begin{lem}\label{lem:nodiag}
    If a permutation matrix \begin{math}M\end{math} is the leaf matrix of exactly one CNAT, then it has no vertices on the diagonal.
\end{lem}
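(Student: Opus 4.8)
The plan is to run the explicit construction from the sufficiency direction of \cref{thm:irred} and keep track of where it makes choices. A permutation matrix which is the leaf matrix of a CNAT is irreducible, so that construction applies to it; it proceeds in \emph{steps}, where one step consists of connecting the currently extremal leaves $l_x,l_y$ through their corner point, together with the connection (``left or upwards'', \cref{rem:twochoices}) of every enclosed vertex $v_i$ that this produces. I would prove two statements. \textbf{(A):} if $M$ is the leaf matrix of exactly one CNAT, then no step of the construction on $M$ produces an enclosed vertex $v_i$. \textbf{(B):} if $M$ is irreducible and the construction on $M$ produces no $v_i$ at any step, then $M$ has no vertex on the diagonal. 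The lemma is immediate from \textbf{(A)} and \textbf{(B)} (for a leaf matrix $M$, irreducibility comes from \cref{thm:irred}, \textbf{(A)} removes all choices, and \textbf{(B)} then forbids a diagonal vertex).

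For \textbf{(A)} I would argue by contradiction. If some step produces an enclosed vertex $v_i$, run the construction twice, making every choice the same in both runs except that at this $v_i$ the connection is ``upwards'' in one run and ``leftwards'' in the other. By \cref{thm:irred} and \cref{rem:twochoices} both runs yield valid CNMs with leaf matrix $M$. They are distinct: the ``upwards'' run places the parent of $v_i$ at $(X(v_i), Y(l_x))$ and the ``leftwards'' run at $(X(l_y), Y(v_i))$, and these positions differ because $v_i$ being enclosed forces $X(v_i) \neq X(l_y)$. This difference survives to the end, since the row and column of $v_i$ are marked within the very step that makes the choice, so no later step of either run can add a point on those two lines, and all earlier steps are identical in the two runs. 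Hence $M$ would be the leaf matrix of at least two CNATs, contradicting uniqueness.

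For \textbf{(B)} I would induct on the size $n$, the base case $n = 2$ being immediate since the only irreducible $2 \times 2$ permutation matrix has both of its $1$'s off the diagonal. Let $n \geq 3$, and let $M'$ be the permutation submatrix of unmarked entries produced by the first step of the construction on $M$. Because that step produces no $v_i$, it marks exactly the last column (through $l_x$) and the last row (through $l_y$) and inserts the corner point $p$; thus $M'$ is an $(n-1) \times (n-1)$ permutation matrix, namely $M$ with its last row and column deleted, $l_x$ and $l_y$ discarded, and $p$ inserted. By the proof of \cref{thm:irred} the matrix $M'$ is irreducible, and the construction applied to $M'$ is exactly the continuation of the construction applied to $M$, so it too produces no $v_i$; by the inductive hypothesis $M'$ has no vertex on the diagonal. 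Suppose now $(k,k)$ were a diagonal vertex of $M$. Then $k \neq n$, since a leaf at $(n,n)$ would be simultaneously $l_x$ and $l_y$, contradicting the distinctness of $l_x$ and $l_y$ that irreducibility guarantees (as used in the proof of \cref{thm:irred}). Hence $k \leq n-1$, so $(k,k)$ lies on neither deleted line and reappears unchanged in $M'$ as a diagonal vertex of $M'$ — a contradiction. This completes the induction, and with it the proof.

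The step I expect to be the main obstacle is \textbf{(A)}: making sure that flipping a single ``left/up'' choice genuinely changes the resulting CNM, rather than having that change cancelled somewhere further along in the construction. What makes it go through is that the row and column of each enclosed vertex are marked as soon as its connection is chosen, so the construction can never return to them. Once that is secured, \textbf{(B)} is a short induction whose inductive step is essentially the observation that a diagonal vertex of $M$ other than the two extremal leaves $l_x,l_y$ is untouched by the first step of the construction.
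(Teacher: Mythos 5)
Your proof is correct and follows essentially the same route as the paper: run the construction from \cref{thm:irred} and use \cref{rem:twochoices} to turn an enclosed vertex into two distinct CNATs with the same leaf matrix. The only difference is organizational — the paper argues directly that a diagonal vertex must itself eventually become one of the enclosed \begin{math}v_i\end{math}, whereas your split into (A) and (B), with the induction in (B) showing that a choice-free run deletes rows and columns symmetrically and hence preserves diagonality, spells out a point the paper leaves terse.
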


\begin{proof}
    Suppose \begin{math}M\end{math} represents a unique CNAT, and \begin{math}M\end{math} has a vertex \begin{math}v'\end{math} on the diagonal. We will show a contradiction.
    
    We use the constructive argument in the proof of sufficiency in \cref{thm:irred}.
    At each step, because \begin{math}M\end{math} is irreducible, \begin{math}v'\end{math} can never be the rightmost or bottom-most vertex, thus it must eventually be enclosed in the connection of those vertices. Then by \cref{rem:twochoices}, we can construct two possible CNATs by choosing to connect \begin{math}v'\end{math} to the left or upwards, contradicting uniqueness.
\end{proof}

\begin{rem}\label{rem:lastl}
    Note that \cref{lem:nodiag} implies that the \begin{math}n\end{math}-th L-subset of a leaf matrix of size \begin{math}n\end{math} always has two vertices, since both the last row and the last column must contain a vertex, which cannot be on the diagonal, meaning that the two vertices need to be distinct.
\end{rem}

\begin{lem}\label{lem:lsubone}
    If a permutation matrix \begin{math}M\end{math} of size \begin{math}n\end{math} is the leaf matrix of exactly one CNAT, then every L-subset other than the \begin{math}n\end{math}-th one has exactly one vertex.
\end{lem}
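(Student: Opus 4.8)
Suppose for contradiction that $M$ is the leaf matrix of a unique CNAT but some $L$-subset $L_k$ with $1 < k < n$ contains at least two vertices. The plan is to run the constructive procedure from the sufficiency proof of \cref{thm:irred} and show that at the step where $L_k$ first becomes relevant, there are (at least) two genuinely different ways to extend $M$ to a CNAT, contradicting uniqueness.

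First I would translate the hypothesis into the language of the construction. Recall that at each step the procedure connects the bottom-most vertex $l_x$ and the rightmost vertex $l_y$ by placing a new point $p$, then marks all rows and columns through the vertices $v_i$ enclosed diagonally, freely choosing for each $v_i$ whether it connects left or up. By \cref{lem:nodiag}, $M$ has no diagonal vertex, so for uniqueness there must be \emph{no} step at which two or more $v_i$ are available (if two were, \cref{rem:twochoices} would give distinct CNATs), and at the step where $l_x, l_y$ themselves are chosen there must be no choice — but $l_x, l_y$ are forced, so that step is fine. Hence uniqueness forces: at every step, the set of enclosed vertices $\{v_i\}$ has size at most one. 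I would then track which rows/columns survive as the "active" permutation submatrix $M'$ at each stage, and argue that if $L_k$ has two vertices, then at the step when the active submatrix first shrinks to live inside rows/columns $\le k$, the two vertices of $L_k$ are positioned so that connecting the bottom-most and rightmost among the surviving vertices encloses both of them — or more carefully, that one of them becomes a free $v_i$ while the other is $l_x$ or $l_y$, again yielding a choice.

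The cleaner route is probably contrapositive combined with an induction on $n$ mirroring the induction in \cref{thm:irred}. Assume every $L$-subset except the $n$-th has exactly one vertex (the $n$-th has two by \cref{rem:lastl}, \cref{lem:nodiag}); I want to show the CNAT is unique, and conversely that dropping this forces a choice. The key structural observation: the condition "$L_k$ has exactly one vertex for all $k<n$" says that for each $k<n$, the vertex in row $k$ and the vertex in column $k$ coincide only when... no — it says exactly one of (the vertex in row $k$ lies in a column $\le k$) and (the vertex in column $k$ lies in a row $\le k$) holds, and they coincide, i.e. $\pi(k) < k \iff \pi^{-1}(k) \ge k$ in a suitable sense, pinning down a very rigid staircase shape. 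Under this rigidity, at each step of the construction exactly one vertex is enclosed, so there is never a binary choice, giving uniqueness; if some $L_k$ had $\ge 2$ vertices, then at the corresponding step $\ge 2$ vertices are enclosed diagonally and \cref{rem:twochoices} produces two distinct CNATs.

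The main obstacle I anticipate is making precise the claim that "$L_k$ has $\ge 2$ vertices" translates exactly into "$\ge 2$ vertices enclosed at some step of the construction." The construction's steps don't proceed index-by-index along the diagonal in an obvious way — they peel off the bottom-most and rightmost vertices, which may be far from position $k$ — so I would need to show that the total count of enclosed vertices over all steps equals $\sum_{k=2}^{n-1}(\#\text{vertices in }L_k - 1)$, or a similar conservation identity, and then deduce that if the sum on the right is positive, some individual step has $\ge 2$ enclosures. Establishing this bookkeeping identity — essentially that each step "uses up" an $L$-subset's excess — is the technical heart; once it is in place, the contradiction with uniqueness via \cref{rem:twochoices} is immediate.
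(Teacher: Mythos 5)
There is a genuine gap, and it starts with a misreading of \cref{rem:twochoices}. That remark says \emph{each} enclosed vertex $v_i$ individually admits two connection choices (left or upwards), so a \emph{single} enclosed vertex at any step already produces two distinct CNATs --- this is exactly how the paper proves \cref{lem:nodiag}. Uniqueness therefore forces \emph{zero} enclosed vertices at every step, not ``at most one'' as you assert; your later sentence ``at each step exactly one vertex is enclosed, so there is never a binary choice'' is internally inconsistent with the construction. Consequently your target of exhibiting a step with $\geq 2$ enclosed vertices is both the wrong threshold and, in general, unattainable. Concretely, for $n=4$ take the permutation matrix with vertices at positions $(1,4),(2,3),(3,1),(4,2)$ (rows mapped to columns by the $4$-cycle $1\mapsto 4\mapsto 2\mapsto 3\mapsto 1$): it is irreducible, its third L-subset contains the two vertices $(3,1)$ and $(2,3)$, yet running the construction encloses exactly one vertex at the first step and none thereafter. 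Your proposed conservation identity $\sum_j m_j=\sum_{k=2}^{n-1}(\#L_k-1)$ also fails here (the right side is $0$ because $L_2$ is empty, the left side is $1$), so the ``bookkeeping'' you identify as the technical heart does not exist in the form you describe.

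Even after correcting the threshold to ``zero enclosed vertices at every step,'' a second difficulty remains that your plan does not address: the steps of the construction are not indexed by L-subsets of the original matrix $M$. Each step inserts a new point $p$ which joins the active permutation submatrix and may itself occupy an L-subset position, so the L-subset counts of $M$ cannot be read off from the enclosure counts directly. The paper's proof confronts exactly this: it inducts on $n$, observes that uniqueness forces no enclosures at the first step (so $M'$ has size $n-1$ and is itself the leaf matrix of a unique CNAT), applies the inductive hypothesis to $M'$, and then performs a case analysis showing that the inserted point $p$ must lie in the $(n-1)$-th L-subset (otherwise a choice arises one step later), whence that L-subset has one vertex in $M$ even though it has two in $M'$. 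This induction-plus-case-analysis on the location of $p$ is the missing content of your argument.
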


\begin{proof}
    We will induct on \begin{math}n\end{math}. Note that by manual verification, \begin{math}n=2\end{math} holds.
    
    Fix \begin{math}n>2\end{math}, and suppose the result holds for \begin{math}n-1\end{math}. We will show it holds for \begin{math}n\end{math}.
    
    Suppose the matrix \begin{math}M\end{math} represents a unique CNAT. We use the construction in the proof of sufficiency in \cref{thm:irred}. By uniqueness, there must not be any vertices enclosed in the box defined connecting the rightmost and bottom-most vertices. But this means that after this connection step, the size of \begin{math}M\end{math} decreases by exactly one, that is \begin{math}M'\end{math} has size \begin{math}n-1\end{math}. Let \begin{math}p\end{math} be the connection point of the rightmost and bottom-most vertices in \begin{math}M\end{math}, and let the \begin{math}n-1\end{math}-th L-subset be \begin{math}J\end{math}. Note that by \cref{rem:lastl}, \begin{math}J\end{math} has two vertices in \begin{math}M'\end{math}. \\
    We distinguish two cases:
    
    \noindent \emph{Case 1}: \begin{math}p\not\in J\end{math}
    
    We deduce that \begin{math}J\end{math} must have two vertices, \begin{math}u,v\end{math} in \begin{math}M\end{math}. Without loss of generality, let \begin{math}X(u) < X(v)\end{math}. Since \begin{math}u,v\end{math} were not enclosed by the connection of the rightmost and bottom-most vertices, we must have \begin{math}X(u) < X(v_y)\end{math}  and \begin{math}Y(v) < Y(v_x)\end{math}. But then, in the next step of the construction, we have \begin{math}X(u) < X(p)\end{math} and \begin{math}Y(v) < Y(p)\end{math} so \begin{math}p\end{math} will be enclosed and then we have a choice, contradiction. So Case 1 cannot occur.
    
    \noindent \emph{Case 2}: \begin{math}p\in J\end{math}
    
    By the inductive hypothesis, every L-subset of \begin{math}M'\end{math} other than \begin{math}J\end{math} has one vertex. But \begin{math}J\end{math} has two vertices in \begin{math}M'\end{math} by \cref{rem:lastl}, so it has one vertex in \begin{math}M\end{math}. Thus, every L-subset in \begin{math}M\end{math} other than the \begin{math}n\end{math}-th indeed contains exactly one vertex.
    
\end{proof}

\begin{thm}
\label{thm:totalzero}
    A permutation matrix \begin{math}M\end{math} of size \begin{math}n\end{math} is the leaf matrix of exactly one CNAT \begin{math}\iff\end{math} every L-subset other than the \begin{math}n\end{math}-th one has exactly one vertex, and there are no vertices on the diagonal.
\end{thm}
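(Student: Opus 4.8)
The plan is to prove both directions, leveraging the lemmas already established. The forward direction ($\Rightarrow$) is essentially free: if $M$ is the leaf matrix of exactly one CNAT, then \cref{lem:nodiag} immediately gives that $M$ has no vertices on the diagonal, and \cref{lem:lsubone} gives that every L-subset other than the $n$-th one has exactly one vertex. So the entire content of the theorem lies in the reverse direction ($\Leftarrow$), which is where I would concentrate the work.

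For the reverse direction, I would argue by induction on $n$, mirroring the structure of the proof of \cref{lem:lsubone}. The base case $n = 2$ is checked by hand. For the inductive step, suppose $M$ has size $n$, has no vertices on the diagonal, and every L-subset except the $n$-th has exactly one vertex. First I would show that $M$ is irreducible: if some $\{1,\dots,j\}$ mapped to itself with $j < n$, then the $(j{+}1)$-th L-subset (which intersects only rows/columns indexed $\le j{+}1$) — actually the cleaner observation is that the $(j{+}1)$-th L-subset would have to contain both the leaf in row $j{+}1$ and the leaf in column $j{+}1$, and since those leaves lie in the ``lower-right'' block they cannot be among the first $j$ coordinates, forcing two vertices in that L-subset and contradicting the hypothesis (unless $j + 1 = n$). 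So $M$ is irreducible and hence, by \cref{thm:irred}, the leaf matrix of \emph{at least} one CNAT. It remains to show uniqueness. Here I would run the iterative construction from the proof of \cref{thm:irred} and show that at every step there is \emph{no} freedom of choice: I must show that the ``enclosed'' vertices $v_i$ (the ones with a genuine left/up choice) never exist, i.e., that at each step the box formed by connecting the rightmost and bottom-most remaining vertices encloses no other vertex. Concretely, I would show that the hypothesis ``every non-final L-subset has exactly one vertex, no diagonal vertices'' is preserved by one step of the construction (passing from $M$ to $M'$ of size $n-1$), so that the size drops by exactly one each time and no choices are ever made; by \cref{rem:twochoices} the CNAT is then forced at every stage, giving uniqueness. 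This is exactly the Case 1 / Case 2 dichotomy from \cref{lem:lsubone} run in reverse, and the L-subset bookkeeping there shows the connection point $p$ lands in the $(n{-}1)$-th L-subset and the inherited structure is precisely the hypothesis for $M'$.

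The main obstacle I anticipate is the uniqueness half of ($\Leftarrow$): one must be careful that "no freedom in the construction" genuinely implies "unique CNAT", rather than merely "unique CNAT obtainable by this particular construction". To close this gap I would either (a) appeal to the remark following \cref{thm:irred} that the construction, with its free binary choices, enumerates \emph{all} CNATs with a given irreducible leaf matrix — if that enumeration is exhaustive and choice-free, uniqueness follows — or (b) argue directly: given any CNAT $N$ with leaf matrix $M$, the bottom-most and right-most leaves $l_x, l_y$ of $M$ must have their common parent at $p = (X(l_y), Y(l_x))$ forced (there is nowhere else it can go, since $l_x$ and $l_y$ each have an all-zero row or column in the full CNM), and then removing the two rows and two columns through $l_x, l_y, p$ and deleting $l_x, l_y$ leaves a smaller CNAT on the permutation submatrix $M'$; the no-enclosed-vertices condition guarantees this is again of the special form, and induction finishes. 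I would present approach (b) as the clean write-up, since it avoids relying on an informally-stated remark.

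Finally, a small sanity check worth including: the two conditions together are equivalent to the single condition that every L-subset of $M$, for $1 < k < n$, has exactly one vertex (since a diagonal vertex $m_{kk}$ would itself be the unique — but then double-counted — content, and conversely \cref{rem:lastl} handles $k = n$). Noting this reformulation makes the statement easier to use downstream, e.g. when counting such matrices toward \cref{cor:totalunique}.
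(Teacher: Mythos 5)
Your forward direction is exactly the paper's (cite \cref{lem:nodiag} and \cref{lem:lsubone}), and your existence argument for the reverse direction (irreducibility from the L-subset condition, then \cref{thm:irred}) also matches the paper, modulo a slip: if $\{1,\dots,j\}$ mapped to itself, the $(j{+}1)$-th L-subset would contain \emph{zero} leaves rather than two, since the leaf of row $j{+}1$ and the leaf of column $j{+}1$ both have their other index $\geq j{+}1$ and can therefore only enter that L-subset as the diagonal entry $m_{j+1,j+1}$, which is excluded; either way the contradiction with ``exactly one vertex'' goes through. The genuine gap is in your uniqueness argument. You rightly flag that ``the construction admits no choices'' does not by itself yield ``there is only one CNAT,'' but your preferred repair (b) rests on the claim that in \emph{any} CNAT with leaf matrix $M$ the extremal leaves $l_x, l_y$ are forced to share the parent $p=(X(l_y),Y(l_x))$, justified only by the observation that each has an all-zero last row or column. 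That observation localizes the parent of $l_x$ to the row of $l_x$ and the parent of $l_y$ to the column of $l_y$; it does not force the two parents to coincide, let alone to sit at the intersection $p$. A priori the parent of $l_x$ could be an internal vertex $(x_1, Y(l_x))$ with $x_1 \neq X(l_y)$ (in general CNATs this really happens: for the size-$3$ anti-diagonal, one of the two CNATs has the bottom leaf parented by an internal vertex that is not the root). Ruling this out is precisely where the L-subset hypothesis must do work --- e.g.\ via the fact that every internal vertex lies strictly above the unique leaf of its column, so such a $q$ forces a leaf in column $x_1$ below row $Y(l_x)$, which must then be shown incompatible with the one-empty-row/one-empty-column structure of the leading submatrices. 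As written, the crux of the theorem sits inside an unjustified parenthetical. Route (a) does not rescue this: the paper never proves that the construction of \cref{thm:irred} enumerates \emph{all} CNATs with a given leaf matrix, and establishing that would require the same missing argument. For contrast, the paper's own reverse direction avoids the construction entirely for uniqueness: it shows by induction on $k$ that every leading $k\times k$ submatrix has exactly one empty row and one empty column, and deduces from this that each leaf has only one possible parent, giving ``at most one CNAT'' directly.

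Separately, your closing ``sanity check'' is false and should be dropped. An L-subset is a set, so a lone diagonal vertex $m_{kk}$ with $1<k<n$ is a single element of the $k$-th L-subset --- nothing is double-counted --- and the condition ``every L-subset with $1<k<n$ has exactly one vertex'' therefore does not imply ``no diagonal vertices.'' Concretely, the size-$3$ anti-diagonal leaf matrix ($m_{13}=m_{22}=m_{31}=1$) satisfies your single condition but is the leaf matrix of two CNATs, so the proposed reformulation would break any downstream use, including the count in \cref{cor:totalunique}.
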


\begin{proof}
    \begin{math}\left( \Rightarrow \right)\end{math} Consequence of \cref{lem:nodiag} and \cref{lem:lsubone}.
    
    \begin{math}\left( \Leftarrow \right)\end{math} Suppose every L-subset of \begin{math}M\end{math} (other than the \begin{math}n\end{math}-th) contains exactly one vertex and there are none on the diagonal.

    Then, for every \begin{math}k\end{math}-th L-subset (\begin{math}1 < k \leq n\end{math}), we claim that the submatrix of the first \begin{math}k-1\end{math} rows and columns must have exactly one empty column and one empty row.
    
    We prove this claim by induction. Note that the base case, \begin{math}k=2\end{math}, is true since by assumption there are no vertices on the main diagonal. For the inductive step, fixing \begin{math}k>2\end{math}, the addition of an L-subset to the first \begin{math}k-1\end{math} rows and columns will result in a submatrix of size \begin{math}k\end{math}. We then place the vertex in the \begin{math}k\end{math}-th L-subset - there are two choices since we must place it below the empty column in the submatrix of size \begin{math}k-1\end{math}, or to the right of the empty row (anywhere else will be below or to the right of an existing leaf). This results in one empty row and one empty column in the submatrix of size \begin{math}k\end{math}.
    
    By the above, every leaf has exactly one possible parent, so \begin{math}M\end{math} represents at most one CNAT. But by \cref{thm:irred} \begin{math}M\end{math} represents \emph{at least} one CNAT, due to the fact that in every submatrix of size \begin{math}k\end{math}, the \begin{math}k\end{math}-th row or column is empty, since every L-subset only contains one leaf and there are none on the diagonal. Hence, \begin{math}M\end{math} represents a unique CNAT.
    
\end{proof}

\begin{cor}\label{cor:totalunique}
    There are \begin{math}2^{n-2}\end{math} permutation matrices of size \begin{math}n\end{math} which are the leaf matrix of exactly one CNAT.
\end{cor}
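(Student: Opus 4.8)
The approach is to turn the characterisation in \cref{thm:totalzero} into a count by reconstructing each qualifying permutation matrix cell by cell from a string of $n-2$ binary choices, essentially re-running the $(\Leftarrow)$ direction of that proof. Assume $n\ge 2$. For a permutation matrix $M$ of size $n$ and $1\le k\le n$, write $S_k(M)$ for the restriction of $M$ to its first $k$ rows and columns. Because a cell $(i,j)$ belongs to the $\max(i,j)$-th L-subset, $S_k(M)$ consists of exactly those vertices of $M$ that lie in the first $k$ L-subsets. Consequently, if $M$ satisfies the hypotheses of \cref{thm:totalzero}, then $S_k(M)$ carries $k-1$ vertices for $2\le k\le n-1$, and being a sub-board of a permutation matrix it has a unique empty row $r_k$ and a unique empty column $c_k$; also $S_1(M)$ has empty row and column $1$.

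Now define a map $\Phi$ from $\{0,1\}^{\{2,\dots,n-1\}}$ to permutation matrices of size $n$. Given $\varepsilon=(\varepsilon_2,\dots,\varepsilon_{n-1})$, begin with $S_1$ the $1\times 1$ zero matrix. For $k=2,\dots,n-1$, if $S_{k-1}$ has been built as a sub-board of a permutation matrix with unique empty row $r$ and unique empty column $c$, let $S_k$ consist of $S_{k-1}$ together with the new cells of the $k$-th L-subset, with a vertex placed at $(k,c)$ when $\varepsilon_k=0$ and at $(r,k)$ when $\varepsilon_k=1$. Finally, with $r,c$ the unique empty row and column of $S_{n-1}$, set $\Phi(\varepsilon)=S_n$, where $S_n$ is $S_{n-1}$ together with the $n$-th L-subset carrying vertices at $(n,c)$ and $(r,n)$. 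A straightforward induction (using $c,r\le k-1<k$ at each step) shows that every $S_k$ with $k\le n-1$ is a sub-board of a permutation matrix with exactly one empty row and one empty column, that $S_n$ is a genuine permutation matrix, that no vertex ever sits on the diagonal, and that the $k$-th L-subset of $S_n$ contains one vertex for $2\le k\le n-1$ and two for $k=n$. By \cref{thm:totalzero}, $\Phi(\varepsilon)$ is then the leaf matrix of exactly one CNAT.

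Finally I would argue that $\Phi$ is a bijection onto the set in the statement. It is injective because in $\Phi(\varepsilon)$ the unique vertex of the $k$-th L-subset lies in row $k$ precisely when $\varepsilon_k=0$ and in column $k$ precisely when $\varepsilon_k=1$, so $\varepsilon$ is recoverable. It is onto: given $M$ that is the leaf matrix of exactly one CNAT, the first paragraph shows the restrictions $S_k(M)$ have the stated shape, and for $2\le k\le n-1$ the unique vertex of the $k$-th L-subset of $M$, being off the diagonal, is forced to occupy $(k,c_{k-1})$ or $(r_{k-1},k)$ — the two cells offered by the construction — while the two vertices of the $n$-th L-subset must be $(n,c_{n-1})$ and $(r_{n-1},n)$; reading off these choices produces $\varepsilon$ with $\Phi(\varepsilon)=M$. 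Hence the number of permutation matrices of size $n$ that are the leaf matrix of exactly one CNAT equals $2^{n-2}$. The real content here is \cref{thm:totalzero}; the step that needs the most care is surjectivity, where one must verify that an arbitrary qualifying $M$ is genuinely recovered step by step, which rests on the remark that $S_k(M)$ sees precisely the first $k$ L-subsets of $M$.
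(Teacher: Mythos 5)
Your proposal is correct and is essentially the paper's own argument in expanded form: the paper also counts by placing the single off-diagonal vertex of each $k$-th L-subset ($2\le k\le n-1$) in one of the two positions permitted by the characterisation of \cref{thm:totalzero}, with the $n$-th L-subset forced, giving $2^{n-2}$. You have merely packaged the same counting as an explicit bijection with $\{0,1\}^{n-2}$ and verified injectivity and surjectivity in detail, which the paper leaves implicit.
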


\begin{proof}
    For each \begin{math}2 \leq k < n\end{math}, by the above we have two choices of where to place the vertex in the \begin{math}k\end{math}-th L-subset. The root cannot be a leaf, and we have no choice of where to place the vertex in the \begin{math}n\end{math}-th L-subset since we must fill in the final empty row and column. Multiplying all options yields the desired result.
    
\end{proof}

\section{Upper-Diagonal CNMs}\label{sec:udcnms}

In this section, we will consider CNMs in which the leaf matrix is anti-diagonal. Bijections will be given from these to other combinatorial objects. To do this, we will need the definitions of a tree-like tableau and its occupied corners according to \cite{abn13} as well as those of fully-tiered trees and their weights, both as used in \cite{dgg19}.

\subsection{Definitions}

\begin{defn}[Upper-Diagonal CNMs]
An \emph{upper-diagonal CNM} is a CNM \begin{math}M\end{math} in which the leaf matrix \begin{math}p(M)\end{math} is anti-diagonal.
\end{defn}

\begin{figure}[H]
    \centering
    \includegraphics{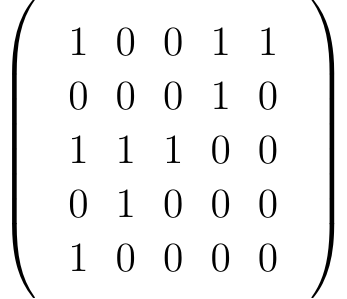}
    \caption{An upper-diagonal complete non-ambiguous matrix}
    \label{fig:UDCNMexamp}
\end{figure}

\begin{defn}[Second Diagonal]
The \emph{second diagonal} of an upper-diagonal CNM \begin{math}M = (m_{ij})\end{math} with size \begin{math}n\end{math} is the set of matrix entries \begin{math}\{m_{1,n-1}, m_{2,n-2}, \dots, m_{n-1,1}\}\end{math}.
\end{defn}

\begin{defn}[Tree-Like Tableaux and Occupied Corners]
    A \emph{tree-like tableau} of size \begin{math}n\end{math} is a collection of left-aligned rows of cells, where each cell is either empty or contains a point (referred to as an empty cell and pointed cell respectively) and there are \begin{math}n\end{math} pointed cells. The following conditions must also be satisfied:
    
    \begin{enumerate}
        \item (Existence of a root) The top-left cell contains a point, referred to as the root.
        \item (Non-ambiguity) Each pointed cell other than the root has another pointed cell in the same row and to the left, or another in the same column and above, but not both.
        \item (Minimality) There are no rows or columns which contain no pointed cells.
    \end{enumerate}
    
    A cell of the tree-like-tableau is a \emph{corner} if there are no cells below or to the right of it. If a corner cell contains a point, then it is an \emph{occupied corner}.
    
\end{defn}

\begin{figure}[H]
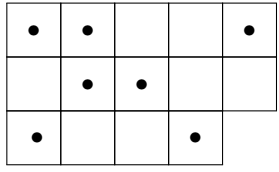

    \centering
    \ytableausetup{boxsize=2em}
    \begin{ytableau}
        \bullet & \bullet & & & \bullet\\
         & \bullet & \bullet & & \\\
        \bullet & & & \bullet
    \end{ytableau}
    \caption{A tree-like tableau of size 7}
\end{figure}

Note the similarities between the definition of tree-like tableaux and CNATs.

The following two definitions are taken from \cite{dgg19}. 

\begin{defn}[Tiered Trees]
    For \begin{math}n \in \mathbb{N}\end{math}, let \begin{math}[n] = \{1, \dots, n\}\end{math}.
    A \emph{tiered tree} of size \begin{math}n\end{math} is a labelled tree on \begin{math}n\end{math} vertices with vertex set \begin{math}V = [n]\end{math}, together with a \emph{tiering function} \begin{math}t : V \mapsto [n]\end{math} such that if \begin{math}u,v \in V\end{math} with \begin{math}u\end{math} adjacent to \begin{math}v\end{math} and \begin{math}u<v\end{math}, then \begin{math}t(u) < t(v)\end{math}. If \begin{math}t(u)=a\end{math}, then \begin{math}u\end{math} is said to be on tier \begin{math}a\end{math}.
    
    If \begin{math}t\end{math} is bijective, then we say the tree is \emph{fully-tiered} and call it a \emph{fully-tiered tree}.
\end{defn}

The tiers of a fully-tiered tree can be graphically represented by ordering vertices into vertical levels, with one vertex per level, where \begin{math}u\end{math} is on the j-th level if \begin{math}t(u)=j\end{math}.

\begin{figure}[H]
    \centering
    \includegraphics{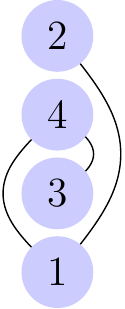}
    \caption{An example of a fully-tiered tree}
    \label{fig:FTTexamp0}
\end{figure}

\begin{defn}[Weights]
    \label{def:weight}
   We define the weight of a fully-tiered tree \begin{math}T\end{math} with tiering function \begin{math}t\end{math} as follows:  Let \begin{math}v\end{math} be the vertex of \begin{math}T\end{math} with minimal label and let \begin{math}T_1, \dots, T_k\end{math} be the connected components of \begin{math}T \setminus \{v\}\end{math}. The \emph{weight} of \begin{math}T\end{math}, \begin{math}\omega(T)\end{math}, is defined recursively as follows:
    
    \begin{displaymath}
    \omega(T) = 
    \begin{cases}
         0 & \text{if } \lvert T \rvert = 1\\
        \sum_{i=1}^k \left( \omega_i + \omega(T_i) \right) & \text{otherwise}
    \end{cases}
    \end{displaymath}
    
    The \begin{math}\omega_i\end{math} are defined like as follows: for each \begin{math}T_i\end{math} let \begin{math}u_i\end{math} be the unique vertex in \begin{math}T_i\end{math} to which \begin{math}v\end{math} is connected.  Then:
    \begin{displaymath}\omega_i=\vert\{u_j\in T_i\mid t(u_j)>t(v)\,\land\, u_j<u_i\}\vert\end{displaymath}
    In other words, \begin{math}\omega_i\end{math} is the zero-indexed position of \begin{math}u_i\end{math} in the set of all vertices in \begin{math}T_i\end{math} that could have been connected to \begin{math}v\end{math}.
    
\end{defn}

\begin{example} 
    We compute the weight of the fully-tiered tree in \cref{fig:FTTexamp0}. 
    
    The minimal vertex, 1, is connected to the components \begin{math}T_1, T_2\end{math} with vertex sets \begin{math}\{2\}\end{math} and \begin{math}\{3,4\}\end{math} respectively. \begin{math}\omega(T_1)=0\end{math} by definition, since \begin{math}\vert T_1\vert = 1\end{math}, and since there is only one vertex in \begin{math}T_1\end{math} that 1 could be connected to, we have \begin{math}\omega_1=0\end{math}. In \begin{math}T_2\end{math}, since 1 could have been connected to 3 instead of 4, we have \begin{math}\omega_2=1\end{math}. It can now be checked that \begin{math}\omega(T_2) = 0\end{math}, so the total is
    \begin{displaymath}\omega(T) = \left( \omega_1+\omega(T_1) \right) + \left( \omega_2 + \omega(T_2) \right) = (0+0)+(1+0)=1\end{displaymath}
\end{example}

\begin{example}
    As seen in \cref{fig:FTTexamp}, there are 5 fully-tied trees on three vertices, with weights 0, 1, 0, 0, 0 respectively:
    
    \begin{figure}[H]
    \centering
    \begin{subfigure}[b]{0.15\textwidth}
        \centering
        \includegraphics{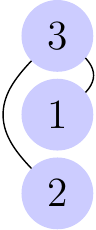}
    \end{subfigure}
    \begin{subfigure}[b]{0.15\textwidth}
        \centering
        \includegraphics{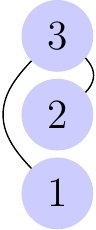}
    \end{subfigure}
    \begin{subfigure}[b]{0.15\textwidth}
        \centering
        \includegraphics{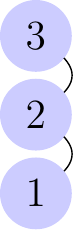}
    \end{subfigure}
    \begin{subfigure}[b]{0.15\textwidth}
        \centering
        \includegraphics{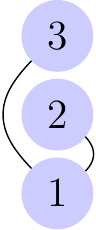}
    \end{subfigure}
    \begin{subfigure}[b]{0.15\textwidth}
        \centering
        \includegraphics{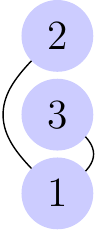}
    \end{subfigure}
    \caption{All fully-tiered trees on three vertices}
    \label{fig:FTTexamp}
\end{figure}

\end{example}

\begin{rem}
    For convenience, we will refer to vertices of a fully-tiered tree by their labels: let vertices \begin{math}v_1, v_2\end{math} have labels \begin{math}a,b\end{math} respectively with \begin{math}a>b\end{math}, then we say \begin{math}v_1\end{math} is larger than \begin{math}v_2\end{math}, and \begin{math}v_2\end{math} is smaller than \begin{math}v_1\end{math}.
\end{rem}

\begin{defn}[Upper-Diagonal FTTs]\label{def:udftt}
    We call a fully-tiered tree an \emph{upper-diagonal FTT} if it has weight 0 and for all its vertices \begin{math}v\end{math}, we have \begin{math}t(v) = v\end{math}.
\end{defn}

\begin{figure}[H]
    \centering
    \includegraphics{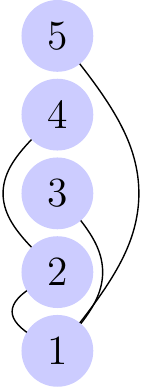}
    \caption{An upper-diagonal FTT}
    \label{fig:UDFTTexamp}
\end{figure}

\subsection{Bijections to Other Objects}
We give bijections between upper-diagonal CNMs, permutations and tree-like tableaux.

\begin{lem}
\label{lem:udftt}
    If \begin{math}n\end{math} is the vertex of the highest level in an upper-diagonal FTT of size \begin{math}n\end{math}, then \begin{math}n\end{math} is a leaf.
\end{lem}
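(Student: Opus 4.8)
The plan is to argue by contradiction: suppose the top-tier vertex $n$ is an internal (non-leaf) vertex of an upper-diagonal FTT $T$ of size $n$, and derive that $\omega(T) > 0$, contradicting \cref{def:udftt}. First I would recall that in an upper-diagonal FTT every vertex $v$ satisfies $t(v) = v$, so the tiering condition "$u$ adjacent to $v$ and $u < v$ implies $t(u) < t(v)$" becomes automatic; the real content is that the weight is forced to vanish. Since $n$ is the unique vertex on the highest tier, every neighbour of $n$ has a strictly smaller label, so in particular $n$ is never the minimal vertex of any subtree arising in the recursive weight computation (the minimal vertex $1$ never has label $n$ unless $n=1$). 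Thus $n$ only ever appears as a non-root vertex inside some component $T_i$ in the recursion of \cref{def:weight}.

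The key step is to track $n$ down through the recursion. At each stage of the weight recursion we delete the current minimal vertex $v$ and split into components $T_1, \dots, T_k$; the vertex $n$ lies in exactly one of these, say $T_i$, and contributes $\omega_i + \omega(T_i)$. I would descend along this chain until we reach the first stage at which $n$ is adjacent to the minimal vertex $v$ being deleted — this must happen eventually, since $n$ has at least one neighbour and the labels of the minimal vertices encountered strictly increase, so at some point the minimal vertex of the current subtree is a neighbour of $n$ (indeed, the smallest-labelled neighbour of $n$). At that stage, $n$ lies in the component $T_i$ containing it, and $u_i$, the unique vertex of $T_i$ attached to $v$, is exactly... here I must be slightly careful: $u_i$ need not be $n$ itself, but $n$ is one of the vertices of $T_i$ with $t(u) > t(v)$, i.e.\ with label larger than $v$. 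The point is that $n$ is the largest label overall, hence $n \in \{u_j \in T_i \mid t(u_j) > t(v)\}$, and if $u_i \neq n$ then there is a vertex (namely $n$) with larger label than some candidate, which does not immediately give $\omega_i > 0$; so instead I would choose the descent so that at the critical stage $v$ is the particular neighbour of $n$ for which $n = u_i$, or argue directly that whenever $n$ is internal it has a neighbour $w$ with $w < n$ lying "below" it in the tree so that at the stage where that edge is cut, the set of admissible attachment points for $v$ properly contains more than just $u_i$, forcing $\omega_i \geq 1$.

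The cleanest way to make this rigorous, and the main obstacle, is the bookkeeping of which edge incident to $n$ gets severed first and showing that internality of $n$ guarantees a strictly positive $\omega_i$ at that moment. Concretely: if $n$ is internal it has $\deg(n) \geq 2$ (weight-$0$ FTTs built from the recursion have a natural binary-ish structure, but even without that, internal means $\deg \geq 2$ in the relevant sense). Let $v^\ast$ be the smallest-labelled neighbour of $n$; run the recursion until $v^\ast$ becomes the minimal deleted vertex of its subtree (all earlier minimal vertices are $< v^\ast$ and hence not neighbours of $n$, so $n$ survives in one component at each step). At that stage $n$ sits in component $T_i$ with $u_i$ = the neighbour of $v^\ast$ in $T_i$; since $v^\ast$ is $n$'s smallest neighbour and $n$ has another neighbour $w$ with $v^\ast < w$... if $w \in T_i$ then $w$ is a candidate attachment point with $w < n$ is false in general — so instead note $n$ itself, being maximal, satisfies: either $u_i = n$ and then $\omega_i = |\{u_j \in T_i : t(u_j) > t(v^\ast),\ u_j < n\}| \geq 1$ because $T_i$ contains at least one other such vertex (the other neighbour $w$ of $n$, which lies in $T_i$ and has label $> t(v^\ast)$ and $< n$), giving $\omega_i \geq 1$; or $u_i \neq n$, but then on the next recursion step we delete $u_i$ (or continue), and $n$ reappears with the same argument — a downward induction on subtree size. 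Either branch yields $\omega(T) \geq \omega_i \geq 1 > 0$, the desired contradiction. So $n$ must be a leaf. $\hfill\qed$
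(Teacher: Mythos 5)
Your final argument is correct, and the key inequality is the same one the paper uses, but you package it differently. The paper runs a minimal-counterexample induction on $n$: if $1$ is adjacent to $n$ it exhibits $\omega_1\geq 1$ directly (using a second neighbour $u$ of $n$ as a smaller admissible attachment point than $u_1=n$); if not, it passes to the component of $T\setminus\{1\}$ containing $n$, relabels it into a smaller upper-diagonal FTT, and invokes minimality. You instead unroll that induction: you descend through the weight recursion of \cref{def:weight} until the first stage at which the deleted minimal vertex is a neighbour of $n$ (necessarily $v^\ast$, the smallest neighbour), and there you produce the same witness ($n$'s other neighbour $w$, with $t(v^\ast)<w<n$) to get $\omega_i\geq 1$, hence $\omega(T)\geq 1$ since every term in the recursion is non-negative. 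This buys you a direct, induction-free statement and avoids the paper's relabelling/flattening step, at the mild cost of having to justify that the recursion really does reach such a stage and that $\omega(T)$ dominates every intermediate contribution (both of which you do, if tersely). Two points to tidy: the branch ``$u_i\neq n$'' in your last paragraph is vacuous and should be deleted rather than hand-waved --- at the critical stage $n$ is adjacent to the deleted vertex $v^\ast$ and lies in $T_i$, and a component of a tree minus a vertex has a \emph{unique} vertex adjacent to the deleted one, so $u_i=n$ automatically; and the meandering false starts in your second and third paragraphs should be cut, since only the final paragraph's argument is the proof.
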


\begin{proof}
    Suppose there is some \begin{math}n\end{math} such that there exists an upper-diagonal FTT, say \begin{math}T\end{math}, on \begin{math}n\end{math} vertices in which the vertex with maximal label \begin{math}n\end{math} is not a leaf. Consider the smallest such \begin{math}n\end{math}; we will show a contradiction by finding a smaller upper-diagonal FTT on fewer vertices in which the highest label is not a leaf.

    Note that since the vertex \begin{math}n\end{math} has degree at least 2 by assumption, \begin{math}n \geq 3\end{math}. Manually checking all cases for \begin{math}n=3\end{math}, we see that the vertex 3 is indeed always a leaf.

    For the rest of the proof, let \begin{math}T'\end{math} be the graph obtained from \begin{math}T\end{math} by deleting the vertex \begin{math}1\end{math} and all edges connected to it.

    \emph{Case 1}: \begin{math}1\end{math} is connected to \begin{math}n\end{math}.
    
    Since \begin{math}n\end{math} is not a leaf, there exists a neighbour \begin{math}u\end{math} of \begin{math}n\end{math} in T, which is distinct from 1.
    Let \begin{math}T_1\end{math} be the connected component of \begin{math}T'\end{math} containing \begin{math}u\end{math} and \begin{math}n\end{math}. Examining weights and the definition of the \begin{math}\omega_i\end{math}, we have \begin{math}\omega_1>0\end{math} since \begin{math}1\end{math} could have been connected to a smaller vertex \begin{math}T_1\end{math}, \begin{math}u\end{math}. By definition of weight, \begin{math}\omega(T) = \sum_{i=1}^k \left( \omega_i + \omega(T_i) \right)\geq \omega_1\geq 1\end{math}. But \begin{math}T\end{math} is an upper-diagonal FTT and thus has \begin{math}\omega(T) = 0\end{math} by definition, contradiction.

    \emph{Case 2}: \begin{math}1\end{math} is not connected to \begin{math}n\end{math}.
    
    Without loss of generality, let \begin{math}T_1\end{math} be the connected component of \begin{math}T'\end{math} containing \begin{math}n\end{math}. Note that \begin{math}n\end{math} has degree \begin{math}\geq 2\end{math} in \begin{math}T_1\end{math} and is the largest vertex of \begin{math}T_1\end{math}. Using \cref{def:weight} and the fact that \begin{math}T\end{math} has weight zero:
    \begin{displaymath}  \omega(T_0) = \sum_i (\omega_i + \omega(T_i))=0  \end{displaymath}
    All these sum terms are non-negative, so \begin{math}\omega(T_1)=0\end{math}.
    After relabelling the vertices of \begin{math}T_1\end{math} such that every vertex \begin{math}v\end{math} is in tier \begin{math}v\end{math}, since \begin{math}\omega(T_1) = 0\end{math} we have that \begin{math}T_1\end{math} (which is non-trivial since it contains at least two vertices) is an upper-diagonal FTT on fewer vertices than \begin{math}T\end{math} with the maximal vertex not being a leaf, contradicting the minimality of \begin{math}T\end{math}.
\end{proof}

\begin{lem}\label{lem:leafmeansudftt}
If \begin{math}T\end{math} is an upper-diagonal FTT of size \begin{math}n\end{math}, then adding the vertex \begin{math}n+1\end{math} in tier \begin{math}n+1\end{math} to \begin{math}T\end{math} gives another upper-diagonal FTT, \begin{math}T'\end{math}.
\end{lem}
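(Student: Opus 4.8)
The plan is to check separately the two conditions in \cref{def:udftt}: that $T'$ is a fully-tiered tree with $t(v)=v$ for every vertex $v$, and that $\omega(T')=0$. The first is essentially immediate and I would dispatch it first. Since $n+1$ is attached by a single edge, $T'$ is a tree; its vertex set is $[n+1]$; the extended tiering function $t(v)=v$ is a bijection onto $[n+1]$; and the tiering condition holds because for every edge $uv$ of $T'$ with $u<v$ we have $t(u)=u<v=t(v)$ — for the old edges this is the hypothesis on $T$, and for the new edge it holds because $n+1$ exceeds every other label. So all the real content lies in showing the weight remains $0$.

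For the weight I would prove the following slightly stronger statement by strong induction on the number of vertices: if $G$ is a fully-tiered tree on a vertex set $V\subset\mathbb{Z}_{>0}$ with $t(v)=v$ for all $v\in V$ and $\omega(G)=0$, and $M>\max V$, then joining a new vertex $M$ (with $t(M)=M$) to an arbitrary vertex $u\in V$ yields a tree $G'$ with $\omega(G')=0$. The statement in the lemma is the case $G=T$, $M=n+1$. The base case $|V|=1$ is a direct check from \cref{def:weight}: $G'$ is a single edge, the minimal vertex has the unique component $\{M\}$, and both $\omega(\{M\})$ and the associated $\omega_1$ vanish.

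For the inductive step, let $v=\min V$; as $M>\max V$ we still have $v=\min(V\cup\{M\})$. Writing $T_1,\dots,T_k$ for the components of $G\setminus\{v\}$, the hypothesis $\omega(G)=\sum_i(\omega_i+\omega(T_i))=0$ together with non-negativity of all summands forces $\omega_i=0$ and $\omega(T_i)=0$ for every $i$. Now I would split on whether $u=v$ or not. If $u=v$, the components of $G'\setminus\{v\}$ are $T_1,\dots,T_k$ and the singleton $\{M\}$: the first $k$ summands of $\omega(G')$ are unchanged, and the new one vanishes exactly as in the base case. If $u\in T_i$ with $u\ne v$, the components of $G'\setminus\{v\}$ are $T_1,\dots,T_{i-1},\,T_i\cup\{M\},\,T_{i+1},\dots,T_k$; the connector $u_i$ of $T_i$ to $v$ is unchanged, and since $M>u_i$ the count $\omega_i$ is unchanged (still $0$); and $T_i$ is itself a diagonal fully-tiered tree of weight $0$ on fewer vertices than $G$ with $M$ exceeding all its labels, so the inductive hypothesis gives $\omega(T_i\cup\{M\})=0$. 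Either way $\omega(G')=0$.

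The step I expect to be the main obstacle — really the only subtle point — is the bookkeeping in this recursion: one must observe that a globally maximal vertex can never be the minimal vertex of any component arising in the recursive weight computation, and can never be tallied by any $\omega_i$ (which only counts vertices strictly below the connector $u_i$), so the sole effect of adjoining $M$ is the appearance of one trivial-weight singleton component at exactly one level of the recursion. Once that is pinned down, applying the strengthened statement to $T$ finishes the proof that $T'$ is an upper-diagonal FTT.
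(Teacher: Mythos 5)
Your proof is correct and follows essentially the same route as the paper's: strong induction on the size, splitting on whether the new maximal vertex attaches to the minimal vertex (in which case it becomes its own singleton component of weight $0$) or to some other vertex (in which case the induction hypothesis is applied to the affected component). Your version is slightly more careful on two points the paper handles implicitly — you build the relabelling invariance into a strengthened induction hypothesis rather than ``flattening'' labels, and you note explicitly that the connector count $\omega_i$ is unaffected because $M$ exceeds the connector — but the argument is the same.
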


\begin{proof}
Note that \begin{math}T \cup \{n+1\}\end{math} (i.e. \begin{math}T'\end{math}) is a fully-tiered tree with \begin{math}t(v) = v\end{math} for all its vertices \begin{math}v\end{math}. We will show that \begin{math}T'\end{math} has weight \begin{math}0\end{math} by strong induction. This is true for \begin{math}n=1\end{math}, \begin{math}n=2\end{math} by inspection, so fix \begin{math}n>2\end{math} and suppose the result holds for up to \begin{math}n-1\end{math}; we will prove it for \begin{math}n\end{math}.

\begin{math}T'\end{math} is a fully-tiered tree with \begin{math}t(v) = v\end{math} for all vertices \begin{math}v\end{math} (because \begin{math}T\end{math} is). Consider the effect of adding the vertex \begin{math}n+1\end{math} to \begin{math}T\end{math} on the computation of the weight.

If \begin{math}n+1\end{math} is connected to 1, \begin{math}n+1\end{math} becomes its own connected component after the removal of \begin{math}1\end{math} during the computation process, and so \begin{math}T\end{math} has weight 0 since the other connected components remain the same.

If \begin{math}n+1\end{math} is not connected to 1, let \begin{math}C\end{math} be the connected component of \begin{math}T' \setminus \{1\}\end{math} which contains the vertex connected to \begin{math}n+1\end{math} in \begin{math}T\end{math}. All other components are unaffected by the addition of \begin{math}n+1\end{math}, and so contribute 0 to the weight sum. But flattening the labels of \begin{math}C\end{math} and applying the induction hypothesis yields that \begin{math}C\end{math} has weight 0, therefore \begin{math}T'\end{math} has weight 0. So \begin{math}T'\end{math} is an upper-diagonal FTT since it satisfies all criteria in \cref{def:udftt}.

\end{proof}

\begin{rem}\label{rem:workstoo}
    By analogous argumentation, removing the maximal vertex from an upper-diagonal FTT of size \begin{math}n\end{math} results in an upper-diagonal FTT of size \begin{math}n-1\end{math}.
\end{rem}

\begin{thm}
    \label{thm:permbij}
    There is a bijection between upper-diagonal CNMs of size \begin{math}n\end{math} and permutations in \begin{math}S_{n-1}\end{math}.
\end{thm}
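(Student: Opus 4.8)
The plan is to factor the desired bijection through upper-diagonal fully-tiered trees (\cref{def:udftt}): I will show that upper-diagonal CNMs of size $n$ correspond to upper-diagonal FTTs of size $n$, and that the latter are naturally indexed by $S_{n-1}$ via the recursive ``peeling'' structure established in \cref{lem:udftt}, \cref{lem:leafmeansudftt} and \cref{rem:workstoo}.

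\emph{Step 1 (CNMs $\leftrightarrow$ FTTs).} Recall the bijection of \cite{dgg19} between CNATs of size $n$ (equivalently, CNMs of size $n$) and fully-tiered trees of weight $0$ on $n$ vertices. I would verify that, under this bijection, a CNAT is upper-diagonal --- i.e. its leaf matrix is anti-diagonal --- precisely when the corresponding weight-$0$ FTT has tiering function $t(v)=v$ for every vertex, i.e. is an upper-diagonal FTT. Morally, the anti-diagonal leaf matrix is exactly the configuration in which the leaves of the CNAT are already sorted along the grid, which forces the tiering produced by the \cite{dgg19} construction to be the identity; conversely the identity tiering reconstructs a CNAT whose $k$-th leaf lies in the $k$-th row and $(n+1-k)$-th column, hence on the anti-diagonal. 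Restricting the bijection then gives a bijection between upper-diagonal CNMs of size $n$ and upper-diagonal FTTs of size $n$.

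\emph{Step 2 (FTTs $\leftrightarrow$ permutations).} Write $\mathcal{U}_m$ for the set of upper-diagonal FTTs of size $m$. Given $T\in\mathcal{U}_n$ with $n\ge 2$, the maximal vertex $n$ is a leaf by \cref{lem:udftt}, so it has a unique neighbour, necessarily some vertex in $\{1,\dots,n-1\}$; deleting $n$ leaves a member of $\mathcal{U}_{n-1}$ by \cref{rem:workstoo}. Conversely, \cref{lem:leafmeansudftt} (applied with the new maximal vertex joined to an arbitrary prescribed vertex of the smaller tree) shows that any element of $\mathcal{U}_{n-1}$ together with a choice of attachment vertex yields an element of $\mathcal{U}_n$, and the two operations are mutually inverse. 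Hence $T\mapsto(T\setminus\{n\},\text{ neighbour of }n)$ is a bijection $\mathcal{U}_n\to\mathcal{U}_{n-1}\times\{1,\dots,n-1\}$. Iterating down to $|\mathcal{U}_1|=1$ produces a bijection from $\mathcal{U}_n$ onto the set of sequences $(p_2,p_3,\dots,p_n)$ with $p_k\in\{1,\dots,k-1\}$, where $p_k$ is the neighbour of vertex $k$ once the vertices $k+1,\dots,n$ have been removed. This set has $\prod_{k=2}^{n}(k-1)=(n-1)!$ elements and is in standard bijection with $S_{n-1}$ via the inversion table (Lehmer code). Composing with Step 1 gives the theorem, and in passing shows there are exactly $(n-1)!$ upper-diagonal CNMs of size $n$.

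The main obstacle is Step 1: extracting from the explicit \cite{dgg19} construction the fact that it interchanges the ``anti-diagonal leaf matrix'' condition on CNATs with the ``$t=\mathrm{id}$'' condition on weight-$0$ FTTs. (One could instead try a self-contained recursion directly on CNMs: an upper-diagonal CNM of size $n$ has no vertices strictly below the anti-diagonal --- so column $n$ contains only the leaf $(1,n)$ and row $n$ only the leaf $(n,1)$ --- after which one peels off the leaf $(1,n)$ together with its parent in row $1$ and reattaches the orphaned branch, aiming to land on an upper-diagonal CNM of size $n-1$ while recording one of $n-1$ choices. The subtlety there is to make the reattachment well defined in every case, in particular when the parent of $(1,n)$ is the root; this makes the FTT route the cleaner one.)
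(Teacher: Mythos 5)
Your proposal follows essentially the same route as the paper: factor through upper-diagonal FTTs via the bijection of \cite{dgg19}, then peel off the maximal vertex (a leaf by \cref{lem:udftt}, with \cref{lem:leafmeansudftt} and \cref{rem:workstoo} guaranteeing the peeling and reattachment stay within upper-diagonal FTTs), encoding the tree as a sequence $(a_2,\dots,a_n)$ with $a_k\in\{1,\dots,k-1\}$ that is converted to an element of $S_{n-1}$ by an inversion-table construction. The Step 1 verification you flag as the main obstacle is dispatched in the paper by a single observation about the explicit form of the \cite{dgg19} map --- it sends a leaf at position $(x,y)$ to the vertex labelled $x$ on tier $n+1-y$, so the anti-diagonal condition $x=n+1-y$ on leaves is literally the condition $t(v)=v$ on vertices.
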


\begin{proof}
    \cite{dgg19} gives a bijection between CNATs and weight \begin{math}0\end{math} fully-tiered trees. In this, leaves of the CNAT at position \begin{math}(x,y)\end{math} are mapped to vertices with label \begin{math}x\end{math} and tier \begin{math}n+1-y\end{math}. Since upper-diagonal CNMs have \begin{math}X(v) = n+1-Y(v)\end{math} for all leaves \begin{math}v\end{math}, under this map the upper-diagonal CNMs are sent precisely to those fully-tiered trees with weight \begin{math}0\end{math} that have \begin{math}t(v) = v\end{math} for all vertices \begin{math}v\in\{1,2,\dots, n\}\end{math}. By \cref{def:udftt}, these are exactly the upper-diagonal FTTs.
    
    If we can find a map bijectively sending upper-diagonal FTTs of size \begin{math}n\end{math} to permutations in \begin{math}S_{n-1}\end{math}, then composing it with the above bijection results in a bijection between upper-diagonal CNMs of size \begin{math}n\end{math} and permutations in \begin{math}S_{n-1}\end{math}.

    Let \begin{math}T_n\end{math} be an upper-diagonal FTT of size \begin{math}n\end{math}. In the following, we will construct a sequence of upper-diagonal FTTs \begin{math}T_n, T_{n-1}, \dots, T_1\end{math}, represented by a sequence of \begin{math}n-1\end{math} distinct integers \begin{math}b_i\end{math}, with \begin{math}1\leq i\leq n-1\end{math}. This representation will be bijective, and interpreting the sequence as an element of \begin{math}S_{n-1}\end{math} gives rise to a permutation.
    
    For the sake of iteration, let \begin{math}S=\emptyset\end{math} and for each \begin{math}n > i \geq 1\end{math}, let \begin{math}T_i\end{math} be the tree obtained by removing the vertex with maximal label and its edges in \begin{math}T_{i+1}\end{math}; this vertex is exactly \begin{math}i+1\end{math}. Let \begin{math}a_i\end{math} be the label of the unique vertex to which \begin{math}i\end{math} connects in \begin{math}T_i\end{math}. We add \begin{math}a_{i+1}\end{math} to our sequence \begin{math}S\end{math}.
    
    Note that by \cref{lem:udftt}, \begin{math}i+1\end{math} was a leaf, so we removed exactly one vertex and one edge. Since \begin{math}T_{i+1}\end{math} comes from removing the vertex \begin{math}i+1\end{math} and its associated edge from \begin{math}T_i\end{math}, by \cref{rem:workstoo} it is also an upper-diagonal FTT. 
    
    Thus we may repeat this process until we reach the tree on one vertex. We obtain a sequence of \begin{math}n-1\end{math} numbers \begin{math}S=(a_1,a_2\dots a_n)\end{math}, where due to the maximality of the vertex \begin{math}i\end{math} in \begin{math}T_i\end{math}, we have \begin{math}1 \leq a_i \leq n-i\end{math}.
    
    Note that any such sequence satisfying \begin{math}1 \leq a_i \leq n-i\end{math} gives rise to a valid upper-diagonal FTT, given that \begin{math}T_i\end{math} is constructed from \begin{math}T_{i-1}\end{math} where the maximal vertex is a leaf, and thus \begin{math}T_i\end{math} has weight 0 by \cref{lem:leafmeansudftt}. We deduce that the set of all possible \begin{math}T_n\end{math} is in bijection with the set of sequences \begin{math}a_i\end{math} which satisfy \begin{math}1 \leq a_i \leq n-i\end{math}.
    
    Finally, to reversibly extract the permutation out of the \begin{math}a_i\end{math}, we define the sequence \begin{math}b_i\end{math} as follows. Let \begin{math}B_0=(1,\cdots, (n-1))\end{math}.
    
    For each \begin{math}1 \leq i < n\end{math}, we choose \begin{math}b_i\end{math} to be the \begin{math}a_i\end{math}th element of \begin{math}B_{i-1}\end{math} and define \begin{math}B_i\end{math} as the sequence \begin{math}B_{i-1}\end{math} in which the \begin{math}a_i\end{math}-th element was removed. Iterating this procedure for increasing values of \begin{math}i\end{math}, the sequence of \begin{math}B_i\end{math} is eventually exhausted. Since \begin{math}a_i\leq n-i=\vert B_{i-1}\vert\end{math}, this procedure always works. Note that at the end of this process, we have a sequence \begin{math}b_1,b_2\dots b_{n-1}\end{math}, where the \begin{math}b_i\end{math} are distinct values in \begin{math}\{1,2,\dots n-1\}\end{math}. Interpreting this as an element of \begin{math}S_{n-1}\end{math} yields a mapping from the \begin{math}a_i\end{math}'s to \begin{math}S_n\end{math}, which is seen to be bijective.
    
    We have shown that upper-diagonal CNMs of size \begin{math}n\end{math} are in bijection with upper-diagonal fully-tiered trees of size \begin{math}n\end{math}, which are in bijection with the \begin{math}a_i\end{math} as defined above, which are in bijection with permutations in \begin{math}S_{n-1}\end{math}.
    
\end{proof}

\begin{cor}\label{cor:totalUDCNM}
   There are \begin{math}(n-1)!\end{math} upper-diagonal CNMs of size \begin{math}n\end{math}.\\
\end{cor}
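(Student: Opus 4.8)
The plan is to derive the corollary as an immediate consequence of \cref{thm:permbij}. That theorem establishes a bijection between upper-diagonal CNMs of size $n$ and permutations in $S_{n-1}$, so counting the former reduces to counting the latter.

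First I would invoke \cref{thm:permbij} to assert the existence of a bijection $\Phi$ between the set of upper-diagonal CNMs of size $n$ and $S_{n-1}$. Since a bijection preserves cardinality, the number of upper-diagonal CNMs of size $n$ equals $\lvert S_{n-1}\rvert$. Then I would simply note that $\lvert S_{n-1}\rvert = (n-1)!$, which is the standard count of permutations of an $(n-1)$-element set. This completes the argument.

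There is essentially no obstacle here: the entire content lives in \cref{thm:permbij}, whose proof chains together the DGG bijection to weight-$0$ fully-tiered trees, the identification (via \cref{lem:udftt}, \cref{lem:leafmeansudftt}, \cref{rem:workstoo}) of upper-diagonal FTTs with sequences $(a_i)$ satisfying $1 \le a_i \le n-i$, and the Lehmer-code-style decoding of such sequences into elements of $S_{n-1}$. The only thing worth spelling out in the corollary's proof, if anything, is that the count $(n-1)!$ can also be read directly off the sequence description: the number of sequences $(a_1,\dots,a_{n-1})$ with $1 \le a_i \le n-i$ is $\prod_{i=1}^{n-1}(n-i) = (n-1)!$, which offers a quick sanity check independent of the final permutation-extraction step. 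I would present the one-line proof and optionally remark on this product formula.
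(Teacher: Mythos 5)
Your proposal is correct and matches the paper's (implicit) argument exactly: the corollary is stated as an immediate consequence of \cref{thm:permbij}, with the count $(n-1)! = \lvert S_{n-1}\rvert$ following from the bijection. The optional product-formula sanity check $\prod_{i=1}^{n-1}(n-i)=(n-1)!$ is a nice touch but not needed.
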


\begin{thm}\label{thm:tltbij}
    There is a bijection between upper-diagonal CNMs of size \begin{math}n\end{math}, and tree-like tableaux of size \begin{math}n-1\end{math}.
\end{thm}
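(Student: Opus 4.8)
The plan is to give an explicit, constructive bijection rather than a counting argument; since \cref{cor:totalUDCNM} already tells us there are $(n-1)!$ upper-diagonal CNMs and it is classical that there are $(n-1)!$ tree-like tableaux of size $n-1$, a bijection is the natural next step and will be more useful for the recurrence in \cref{sec:udcnms}. First I would recall the key structural feature of an upper-diagonal CNM $M$ of size $n$: its leaf matrix is anti-diagonal, so the leaf in row $i$ sits in column $n+1-i$, and every other $1$-entry of $M$ is an internal vertex lying strictly above the anti-diagonal (i.e.\ in positions $(i,j)$ with $i+j<n+1$), since no vertex can lie below or to the right of a leaf. Thus the ``interesting'' content of $M$ is entirely the internal vertices, which occupy the staircase-shaped region strictly above the anti-diagonal — and this region is, up to reflection, exactly the shape of a tree-like tableau of size $n-1$. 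Concretely, I would take the subdiagram of $M$ consisting of cells $(i,j)$ with $i+j\le n$ (the anti-diagonal itself plus everything above it), which forms a staircase Young-diagram shape, and reinterpret its $1$-entries: the anti-diagonal $1$'s (the leaves) together with the internal $1$'s should map to the pointed cells of a tree-like tableau, after the obvious affine change of coordinates turning the anti-diagonal staircase into a left-justified staircase.

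The key steps, in order. (1) Set up the coordinate dictionary: send matrix entry $(i,j)$ of $M$ with $i+j\le n$ to a cell of a left-aligned diagram, e.g.\ row $i$, column $j'$ where the staircase is flattened so that row $i$ has exactly $n-i$ cells; the leaf of row $i$ (at $(i,n+1-i)$) becomes the last cell of row $i$. (2) Verify the three tree-like-tableau axioms for the image: the root $(1,1)\in M$ maps to the top-left pointed cell; non-ambiguity transfers directly because the precursor relation in the CNAT (unique parent in the same row to the left, or same column above) is literally the non-ambiguity condition for tableaux, and crucially no pointed cell of the image has \emph{both} because $M$ satisfies that; minimality (no empty row/column) holds because $M$ has a leaf in every row and column, and each leaf survives into the truncated region. (3) Check surjectivity/invertibility: given a tree-like tableau $\tau$ of size $n-1$, read off its pointed cells, undo the coordinate change to place them in the staircase region $i+j\le n$ of an $n\times n$ grid; I must argue the result is automatically an upper-diagonal CNM — the delicate point being completeness (every vertex has $0$ or $2$ children) and that the leaves land exactly on the anti-diagonal. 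For the leaves: in a tree-like tableau the cells with no cell below and none to the right are precisely the ``corner'' cells, but more relevantly, in the flattened staircase the last cell of each row is forced to be the anti-diagonal cell, and one shows every row's rightmost pointed cell and every column's bottommost pointed cell coincide with the anti-diagonal entry; completeness of the CNAT should follow from the tableau's non-ambiguity plus the count $2n-1$ of vertices (alternatively, one checks directly that each internal vertex, having a cell to its right and below within the staircase, acquires exactly two children).

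The main obstacle I anticipate is step (3), specifically proving that an arbitrary tree-like tableau of size $n-1$, transported into the staircase region, really does produce a \emph{complete} non-ambiguous tree whose leaves are exactly the anti-diagonal — i.e.\ that the tableau's corner/boundary structure forces the reflected diagram's $1$-entries outside the region to be empty and the anti-diagonal to be full of leaves. I would handle this by a careful induction on $n$ (peeling off the bottom row, which contains a single pointed cell — the leaf in row $n-1$ of the tableau, corresponding to the entry $(n-1,1)$ — mirroring \cref{rem:workstoo} and the peeling argument in the proof of \cref{thm:permbij}), or alternatively by composing with the already-established bijection of \cref{thm:permbij}: it would suffice to exhibit a bijection between tree-like tableaux of size $n-1$ and the sequences $(a_1,\dots,a_{n-1})$ with $1\le a_i\le n-i$ from that proof, which is a well-known ``canonical insertion'' description of tree-like tableaux and lets me borrow all the CNM-side verification for free. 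I would present the direct construction as the main argument and remark on the factorization through \cref{thm:permbij} as a consistency check.
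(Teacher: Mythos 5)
There is a genuine gap at the very first step: the object your map produces is not a tree-like tableau of size $n-1$. By the paper's definition the size of a tree-like tableau is its number of \emph{pointed} cells, and an upper-diagonal CNM of size $n$ has $2n-1$ vertices: the $n$ leaves on the anti-diagonal $i+j=n+1$ plus $n-1$ internal vertices. Your construction declares that ``the anti-diagonal $1$'s (the leaves) together with the internal $1$'s'' all become pointed cells, so your image has $2n-1$ points rather than $n-1$, and it sits in a fixed staircase shape, whereas tree-like tableaux of size $n-1$ have varying shapes. (There is also an internal inconsistency: the region $i+j\le n$ you write down \emph{excludes} the anti-diagonal $i+j=n+1$, so the leaves are not even in your subdiagram.) The correct construction deletes the leaves, keeping only the $n-1$ internal vertices in the region $i+j<n+1$, and then deletes the rows and columns that have thereby become empty; this is what restores minimality and yields a genuine size-$(n-1)$ tableau whose shape depends on the CNM. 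In particular your justification of minimality via ``each leaf survives into the truncated region'' must be discarded, precisely because the leaves must not survive.

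Your anticipated obstacle in step (3) is real but entirely avoidable. Once the corrected map is in place it is visibly injective (the deleted leaves and empty rows and columns are reinserted uniquely, since the leaves must form the anti-diagonal), and surjectivity then follows from a counting argument rather than a direct inverse construction: by \cref{thm:permbij} there are $(n-1)!$ upper-diagonal CNMs of size $n$, and it is known that there are $(n-1)!$ tree-like tableaux of size $n-1$, so an injection between finite sets of equal cardinality is automatically a bijection. The direct verification you sketch --- that an arbitrary tableau transported into the staircase produces a complete non-ambiguous tree with anti-diagonal leaf matrix --- is the hard way around and is not needed; your alternative of factoring through the sequences $(a_1,\dots,a_{n-1})$ of \cref{thm:permbij} could work but would require establishing a separate nontrivial bijection with tableaux.
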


\begin{proof}
    
    Firstly, given an upper-diagonal CNM of size \begin{math}n\end{math}, say \begin{math}M = (m_{i,j})\end{math}, we will injectively construct a tree-like tableau of size \begin{math}n-1\end{math}. Let us define \begin{math}A \end{math} to be the set of all elements of \begin{math}M\end{math} that are above the upper-diagonal:
    \begin{displaymath}
        A := \{ m_{i,j} \in M \mid i+j < n+1\}
    \end{displaymath}
    Then, we delete any empty rows and columns in \begin{math}A\end{math} to obtain the set \begin{math}S\end{math}:
    \begin{displaymath}
        S := \{a_{i,j} \in A \mid \exists\,a_{u,v} = 1 \in A \text{ s.t. } i=u \text{ or } j=v \}
    \end{displaymath}
    For the construction of the tree-like tableau, we create a collection of cells whose coordinates correspond to those of the points in \begin{math}S\end{math} (after relabelling to ensure every row is left-aligned, and every column is top-aligned), where the cell is filled if the corresponding point is a 1 and empty otherwise.
    
    Since we have taken a CNM and deleted the leaves, existence of a root, non-ambiguity and minimality are still all satisfied. (Note that completeness is no longer guaranteed). Thus the result is a valid tree-like tableau. Also note that this construction is injective and the resulting tree-like-tableau has size \begin{math}n-1\end{math}, because there are \begin{math}n-1\end{math} internal vertices of \begin{math}T\end{math}. But it is known that the number of tree-like tableaux of size \begin{math}n-1\end{math} is equal to \begin{math}(n-1)!\end{math} \cite{abn13}, which equals the number of upper-diagonal CNMs of size \begin{math}n\end{math} by \cref{thm:permbij}. Thus, our construction must be bijective.
\end{proof}

\begin{example}
    In \cref{fig:tltcons} below, we construct a tree-like tableau out of an upper-diagonal CNM (which has leaves in red and internal vertices in green). Everything on or below the upper-diagonal is removed to obtain the set \begin{math}A\end{math}, then empty rows and columns (the red lines) are removed and it is turned into a tree-like tableau.
\end{example}

\begin{figure}[H]
    \centering
    \begin{subfigure}[b]{0.5\textwidth}
        \centering
        \includegraphics{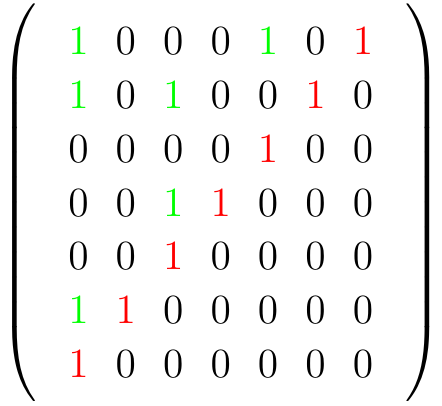}
        \caption{An upper-diagonal CNM}
    \end{subfigure}%
    \begin{subfigure}[b]{0.5\textwidth}
        \centering
        \includegraphics{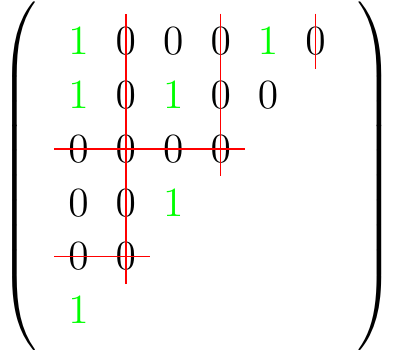}
        \caption{The set of points \begin{math}A\end{math}}
    \end{subfigure}
    
    \vspace{10mm}
    \begin{subfigure}{0.5\textwidth}
        \centering
        \ytableausetup{boxsize=2em}
        \begin{ytableau}
            \bullet & & \bullet \\
            \bullet & \bullet & \\
            & \bullet\\
            \bullet
        \end{ytableau}
        \caption{The constructed tree-like tableau}
    \end{subfigure}
    \caption{The construction process}
    \label{fig:tltcons}
\end{figure}

\begin{rem}
    The above construction sends vertices in the second diagonal of the upper-diagonal CNM to occupied corners in the tree-like tableau, and vice versa.
\end{rem}

\subsection{Proving a Conjecture by Laborde-Zubieta}

It was conjectured in \cite{zub15} in Remark 3.3 that the number of tree-like tableaux of size \begin{math}n\end{math} having no occupied corners is equal to the sequence A184185 in \cite{oeis}, which counts the number of permutations in \begin{math}S_n\end{math} having no cycles of the form \begin{math}(i, i+1, \dots, i+j)\end{math} for \begin{math}j \geq 0\end{math}.

Recall that in the bijective construction for \cref{thm:permbij}, occupied corners of a tree-like tableau were sent exactly to the vertices in the second diagonal of the corresponding upper-diagonal CNM and vice versa. Hence, the number of TLTs of size \begin{math}n\end{math} having no occupied corners is equal to the number of upper-diagonal CNMs of size \begin{math}n+1\end{math} with a completely zero second diagonal. Now, we establish and explicitly solve a recurrence relation for the latter value.

\begin{defn}
    \label{def:f}
    For \begin{math}0 \leq k < n-1\end{math}, we define \begin{math}f(n,k)\end{math} to be the number of \begin{math}n \times n\end{math} upper-diagonal CNMs \begin{math}M\end{math} with the first \begin{math}k\end{math} elements of the second diagonal starting with exactly \begin{math}k\end{math} zeros, followed by a one. For \begin{math}k = n-1\end{math}, we extend this definition by defining \begin{math}f(n,n-1)\end{math} to be the number of upper-diagonal CNMs with the second diagonal being completely zero.
\end{defn}

\begin{defn}\label{def:s}
    We define the sum function \begin{math}s(n,k)\end{math} by:
    \begin{displaymath} s(n,k)=\sum_{j=0}^{k}{f(n,j)}\end{displaymath}
\end{defn}

\begin{rem}\label{rem:totalsum}
    Since \begin{math}f(n,k)\end{math} counts the number of \begin{math}n \times n\end{math} upper-diagonal CNMs with \begin{math}k\end{math} leading zeroes in the second diagonal of the matrix, summing over all possibilities we deduce by \cref{cor:totalUDCNM} that \begin{math}s(n,n-1)=(n-1)!\end{math}
\end{rem}

\begin{thm}[Recursion Formula for \begin{math}f(n,k)\end{math}]\label{thm:recformula1}
\end{thm}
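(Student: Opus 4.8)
The plan is to find a recursion for $f(n,k)$ by analyzing the structure of an $n\times n$ upper-diagonal CNM near the top of its second diagonal. Recall from \cref{thm:permbij} that upper-diagonal CNMs of size $n$ correspond to sequences $(a_1,\dots,a_{n-1})$ with $1\le a_i\le n-i$, equivalently to permutations in $S_{n-1}$; the second-diagonal entries are determined by how each internal vertex connects to its children. The key observation I would exploit is that the entry $m_{1,n-1}$ of the second diagonal is $1$ precisely when the root $(1,1)$ is the parent of the leaf in column $n-1$ (i.e. the leaf at position $(n-1,2)$, using the anti-diagonal leaf matrix), and more generally the first block of zeros followed by a one in the second diagonal records where the "staircase" of forced internal vertices first breaks. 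So $f(n,0)$ counts upper-diagonal CNMs where $m_{1,n-1}=1$, and $f(n,k)$ for $k\ge 1$ counts those where $m_{1,n-1}=\dots=m_{k,n-k}=0$ but $m_{k+1,n-k-1}=1$.

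First I would set up the bijection between upper-diagonal CNMs with a prescribed number of leading second-diagonal zeros and smaller upper-diagonal CNMs. The idea: if the first $k$ second-diagonal entries are zero, then the leaves in columns $n-1, n-2, \dots, n-k$ (equivalently rows $2,3,\dots,k+1$) each connect "the long way" — not to the row/column immediately available on the second diagonal — and one can peel off these $k$ forced leaves and their parents, together with the topmost rows/columns, to obtain an upper-diagonal CNM of size $n-k$ (or $n-k-1$) in which the analogous second-diagonal entry is now a $1$. Conversely, inserting $k$ such rows/columns back is essentially free. This should yield an identity expressing $f(n,k)$ in terms of $f(n-k, 0)$ or of the "first entry is $1$" count in a smaller CNM, times a combinatorial factor counting the ways to route the peeled-off leaves.

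Next I would compute $f(n,0)$ directly — the number of $n\times n$ upper-diagonal CNMs with $m_{1,n-1}=1$ — and relate it to $f(n-1,\cdot)$. When $m_{1,n-1}=1$, the root has one child going down column $1$ and the leaf in column $n-1$ sits directly, so removing row $1$/column $n-1$ (or the root's subtree structure) should give an upper-diagonal CNM of size $n-1$ with no constraint, contributing $(n-2)!$, but one must be careful about the other child of the root and how it interacts with the second diagonal of the smaller object. I expect the clean statement to be something like
\begin{displaymath}
f(n,k) = \bigl((n-k-1)! - s(n-k-1, \text{something})\bigr)\cdot(\text{multiplicity}),
\end{displaymath}
or a recursion of the form $f(n,k) = (n-k-1)\,f(n-1,k-1)$ together with a boundary relation like $f(n,0) = (n-1)! - s(n-1,n-2)\cdot(\dots)$; I would pin down the exact shape by checking small cases ($n=3,4,5$) against the permutation interpretation from \cref{thm:permbij} before committing. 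The complementary relation $s(n,n-1)=(n-1)!$ from \cref{rem:totalsum} serves as a consistency check and is likely needed to close the recursion.

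The main obstacle will be the bookkeeping in the peeling argument: when the first $k$ second-diagonal entries are zero, each of the $k$ corresponding leaves has two or more admissible parents (as in \cref{rem:twochoices}), and I must show that after removing them the remaining object is a genuine upper-diagonal CNM whose second diagonal has the correct prescribed form, and that the number of valid re-insertions is exactly the claimed multiplicity — neither over- nor under-counting. In particular I need to verify that the "break" at position $k+1$ in the larger matrix corresponds precisely to a "$1$ in the first available second-diagonal slot" in the reduced matrix, which is where \cref{lem:leafmeansudftt}, \cref{rem:workstoo}, and the explicit sequence description $1\le a_i\le n-i$ from the proof of \cref{thm:permbij} should do the heavy lifting. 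Once the bijection is nailed down, assembling it into the stated formula for $f(n,k)$ (and hence, after summing, into the closed form for $a(n)$) should be routine.
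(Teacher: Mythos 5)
Your proposal is a plan rather than a proof, and the candidate recursions you float at the decision point --- $f(n,k) = (n-k-1)\,f(n-1,k-1)$, or $f(n,k) = \bigl((n-k-1)! - s(\cdot)\bigr)\cdot(\text{multiplicity})$ --- are not the recursion that actually holds, which is $f(n,k) = \sum_{j=k-1}^{n-2} f(n-1,j)$ for $0 \le k < n-1$, with the all-zero case $k=n-1$ obtained by subtracting $\sum_{i=0}^{n-2} f(n,i)$ from the total $(n-1)!$ of \cref{rem:totalsum} and swapping the order of summation. The step you explicitly defer (``pin down the exact shape by checking small cases'') is the entire content of the theorem, and nothing in your sketch forces the correct answer out; a product-form recursion in particular cannot reproduce the sum over all $j \ge k-1$.

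The missing idea is to condition not on the $k$ zeros but on the single $1$ that terminates them. If the second diagonal has exactly $k$ leading zeros, there is an internal vertex $v$ at position $(n-1-k,\,k+1)$; applying non-ambiguity to the two anti-diagonal leaves $(n-k,\,k+1)$ and $(n-k-1,\,k+2)$ adjacent to it forces row $n-k$ and column $k+2$ to be empty apart from those leaves. Deleting that \emph{one} row and \emph{one} column yields an upper-diagonal CNM of size $n-1$ in which $v$ has become an anti-diagonal leaf and whose second diagonal has at least $k-1$ leading zeros, and every such smaller CNM arises exactly once; summing over the possible leading-zero counts $j \ge k-1$ of the reduced matrix gives the formula. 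This is a size reduction by one driven by a forcing argument, not the peel-of-$k$ you describe, and your framing that each of the $k$ ``zero'' leaves has several admissible parents points in the wrong direction: it is precisely the rigidity imposed by the $1$ at position $k+1$ that makes the reduction map well defined. Your approach would also still need a separate mechanism for $k=n-1$, where there is no terminating $1$ to anchor the peeling.
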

\begin{displaymath}
    f(n,k) =
    \begin{cases}
        0 & \text{if } k=-1 \text{ (by convention)}
        \\[10pt]
        \sum_{j=k-1}^{n-2}f(n-1,j) & \text{if } 0 \leq k < n-1
        \\[10pt]
        \sum_{j=0}^{n-4}(n-3-j)f(n-1,j) & \text{if } k=n-1
    \end{cases}
\end{displaymath}

\begin{proof}
    Consider an \begin{math}n \times n\end{math} upper-diagonal CNM with second diagonal containing \begin{math}k\end{math} leading zeros.
    
    \noindent \textit{Case 1}: \begin{math}k<n-1\end{math}

    By assumption, there exists a vertex \begin{math}v\end{math} of the matrix at position\\ \begin{math}(n-1-k,k+1)\end{math}. 
    Thus, applying the non-ambiguity condition to the leaves \begin{math}(n-k, k+1)\end{math} and \begin{math}(n-k-1, k+2)\end{math}, this forces the column \begin{math}k+2\end{math} and the row \begin{math}n-k\end{math} to be empty except for these anti-diagonal nodes.  Removing this column and row, we are left with a size \begin{math}n-1\end{math} upper-diagonal CNM, in which \begin{math}v\end{math} has become a leaf in the anti-diagonal of the new matrix, and in which the second diagonal starts with \emph{at least} \begin{math}k-1\end{math} zeroes, since all columns to the right of \begin{math}v\end{math} are shifted down by one, and the deletion of row \begin{math}k+2\end{math} removes a leading zero whenever \begin{math}k>0\end{math}. The cases for the number of leading zeroes are mutually exclusive, so summing over all values we obtain:
    \begin{displaymath}
        f(n, k) = \sum_{j=k-1}^{n-2}f(n-1,j)
    \end{displaymath}
    Note that the convention \begin{math}f(n,-1) = 0\end{math} allows us to incorporate \begin{math}k=0\end{math} in this recursive definition also.
    
    \noindent \textit{Case 2}: \begin{math}k=n-1\end{math}
    
    We evaluate \begin{math}f(n,n-1)\end{math} by subtracting all other cases from the total of \begin{math}(n-1)!\end{math} (by \cref{rem:totalsum}).
    \begin{align*}
        f(n, n-1) &= (n-1)! - \sum_{i=0}^{n-2}f(n,i)\\
        &= \left[(n-1)\sum_{i=0}^{n-2}f(n-1,i)\right] - \left[ \sum_{i=0}^{n-2} \sum_{j=i-1}^{n-2} f(n-1,j) \right] \;\text{ (by Case 1)}\\
        &= \sum_{j=0}^{n-4} (n-3-j) f(n-1,j) \;\text{ (by swapping double summation)}
    \end{align*}
\end{proof}

\begin{lem}\label{lem:reccase1}
    \begin{displaymath}f(n,0)=(n-2)!\end{displaymath}
\end{lem}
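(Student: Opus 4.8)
The plan is to read this off directly from the recursion of \cref{thm:recformula1}, specialising the middle branch to $k=0$. Since $0\le 0<n-1$ whenever $n\ge 2$, that branch gives
\[
f(n,0)\;=\;\sum_{j=-1}^{n-2}f(n-1,j).
\]
The $j=-1$ term is $f(n-1,-1)=0$ by the convention in \cref{thm:recformula1}, so the sum collapses to $\sum_{j=0}^{n-2}f(n-1,j)$, which is precisely $s(n-1,n-2)$ in the notation of \cref{def:s}.

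Next I would invoke \cref{rem:totalsum} with $n$ replaced by $n-1$: summing $f(m,\cdot)$ over all admissible numbers of leading zeros in the second diagonal accounts for every $m\times m$ upper-diagonal CNM, so $s(m,m-1)=(m-1)!$ by \cref{cor:totalUDCNM}. Taking $m=n-1$ yields $s(n-1,n-2)=(n-2)!$, and chaining this with the previous display gives $f(n,0)=(n-2)!$, as claimed.

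I do not anticipate any genuine obstacle here: the combinatorial content was already established in \cref{thm:recformula1} and \cref{rem:totalsum}, and this lemma is just the clean $k=0$ specialisation (it will serve as the anchor for computing $f(n,k)$ for larger $k$ and, ultimately, $f(n,n-1)$). The only point worth a word of care is the smallest instance: for $n=2$ the argument passes through $s(1,0)$, which is mildly degenerate, but by inspection the unique $2\times 2$ upper-diagonal CNM gives $f(2,0)=1=0!$, so the formula holds there as well.
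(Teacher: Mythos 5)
Your argument is exactly the paper's proof: apply Case 1 of \cref{thm:recformula1} with $k=0$, drop the $f(n-1,-1)=0$ term, and identify the remaining sum as $s(n-1,n-2)=(n-2)!$ via \cref{rem:totalsum}. The extra sanity check at $n=2$ is harmless but not needed.
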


\begin{proof}
By Case 1:
\begin{align*}
    f(n,0) &= \sum_{j=-1}^{n-2}{f(n-1,j)}\\
    &= \underbrace{f(n-1,-1)}_\text{= 0} + \sum_{j=0}^{n-2}{f(n-1,j)}\\
\end{align*}
By \cref{rem:totalsum}, this evaluates to \begin{math}(n-2)!\end{math}, as required.
\end{proof}

\begin{lem}\label{lem:reccase2}
    For \begin{math}0 < k < n-1\end{math},
    \begin{displaymath}f(n,k)=f(n,k-1)-f(n-1,k-2)\end{displaymath}
\end{lem}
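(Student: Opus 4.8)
The plan is to derive this identity directly from Case 1 of \cref{thm:recformula1} by a one-line telescoping of the defining sum. Since we are in the range $0 < k < n-1$, both $k$ and $k-1$ satisfy the hypothesis $0 \le \cdot < n-1$ of Case 1, so I may write
\begin{displaymath}
    f(n,k) = \sum_{j=k-1}^{n-2} f(n-1,j), \qquad f(n,k-1) = \sum_{j=k-2}^{n-2} f(n-1,j).
\end{displaymath}

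Then I would simply subtract the first sum from the second: all terms with $j$ ranging from $k-1$ to $n-2$ cancel, and the only surviving term is the $j = k-2$ term of the second sum, giving $f(n,k-1) - f(n,k) = f(n-1,k-2)$, which rearranges to the claim.

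The only point needing a word of care is the boundary case $k = 1$, where $k-2 = -1$: here the lower summand $f(n-1,-1)$ appearing in the expansion of $f(n,k-1) = f(n,0)$ is $0$ by the convention established in \cref{thm:recformula1}, so the identity reads $f(n,1) = f(n,0) - 0$, which is exactly what the telescoping produces (and indeed matches \cref{lem:reccase1} together with Case 1 evaluated at $k=1$). I do not anticipate any genuine obstacle; the lemma is a formal consequence of the recursion already proved, and the argument is essentially a single algebraic manipulation once the two instances of Case 1 are written down side by side.
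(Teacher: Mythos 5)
Your proposal is correct and is essentially the paper's own proof: the paper likewise writes $f(n-1,k-2)$ as the difference of the two Case~1 sums $\sum_{j=k-2}^{n-2}f(n-1,j)$ and $\sum_{j=k-1}^{n-2}f(n-1,j)$, identifies these with $f(n,k-1)$ and $f(n,k)$, and rearranges. Your extra remark on the $k=1$ boundary (using the convention $f(n-1,-1)=0$) is a welcome bit of care but does not change the argument.
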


\begin{proof}
    \begin{align*}
        f(n-1,k-2) &= \sum_{j=k-2}^{n-2}{f(n-1,j)} - \sum_{j=k-1}^{n-2}{f(n-1,j)}\\
        &= f(n,k-1)-f(n,k) & \text{(by Case 1)}
    \end{align*}
    Rearranging yields the desired result.
\end{proof}

\begin{lem}\label{lem:reccase3}
\begin{displaymath}f(n,n-1)=f(n+1,n-1)-f(n,n-2)\end{displaymath}
\end{lem}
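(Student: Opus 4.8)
The plan is to obtain \cref{lem:reccase3} as an immediate consequence of Case~1 of \cref{thm:recformula1}, by expanding $f(n+1,n-1)$ and recognising both $f(n,n-2)$ and $f(n,n-1)$ among the resulting terms.

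First I would verify that Case~1 of \cref{thm:recformula1} is applicable to $f(n+1,n-1)$: with ``size'' $n+1$ and ``number of leading zeros'' $n-1$ one has $0 \le n-1 < (n+1)-1$, so the recursion gives
\begin{displaymath}
    f(n+1,n-1) \;=\; \sum_{j=(n-1)-1}^{(n+1)-2} f\big((n+1)-1,\,j\big) \;=\; \sum_{j=n-2}^{\,n-1} f(n,j).
\end{displaymath}
Next I would observe that this sum has exactly two summands, $f(n,n-2)$ and $f(n,n-1)$. The term $f(n,n-2)$ is an ordinary (Case~1) value, while $f(n,n-1)$ is the extended ``completely zero second diagonal'' quantity of \cref{def:f}; it legitimately occurs as the top term of the Case~1 sum, because in the reduction step underlying Case~1 the second diagonal of the smaller matrix may consist entirely of zeros, which for an $n\times n$ matrix means $n-1 = (n+1)-2$ leading zeros. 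Rearranging the identity $f(n+1,n-1) = f(n,n-2) + f(n,n-1)$ then yields the claim.

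The only real care needed --- and hence the main obstacle, though it is quite minor --- is the index bookkeeping: confirming that the lower limit of the Case~1 sum is $n-2$ (not $n-3$), that the upper limit is exactly $n-1$ so that $f(n,n-1)$ appears with its extended meaning, and that no spurious term such as $f(n,n)$ is produced. Once these bounds are checked the lemma drops out by a single rearrangement, with no further computation required.
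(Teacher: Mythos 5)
Your proof is correct, and it is considerably more direct than the one in the paper. You specialise Case~1 of \cref{thm:recformula1} to $f(n+1,n-1)$ (the hypothesis $0\le n-1<(n+1)-1$ holds), obtain $f(n+1,n-1)=\sum_{j=n-2}^{n-1}f(n,j)=f(n,n-2)+f(n,n-1)$, and rearrange. Your index bookkeeping is right, and in particular the top term of the Case~1 sum really is the extended quantity of \cref{def:f}: the derivation of Case~1 sums over \emph{all} possible numbers of leading zeros of the smaller matrix, including the completely-zero case $j=n-2$ for a size-$(n-1)$ matrix, and the paper itself already relies on this reading when it derives Case~2 of the recursion from \cref{rem:totalsum}. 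The paper's own proof of \cref{lem:reccase3} takes a much longer route: it introduces the partial sums $s(n,k)$, telescopes the difference $s(n,n-2)-s(n-1,n-3)$ using \cref{lem:reccase2}, and invokes $f(n,1)=(n-2)!$ before rearranging. That argument works entirely with the simplified recurrences of \cref{lem:reccase1} and \cref{lem:reccase2} rather than returning to the raw Case~1 sum, but it gains nothing in generality; your two-term expansion establishes the same identity in essentially one line.
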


\begin{proof}
    Recalling that \begin{math}s(n-1,n-2) = (n-2)!\end{math} yields:
    \begin{equation}\label{eq:rec1}
    \begin{split}
        s(n-1,n-3) &= (n-2)!-f(n-1,n-2)
    \end{split}
    \end{equation}
    
    But if we consider \begin{math}\Delta = s(n,n-2) - s(n-1,n-3)\end{math}:
    \begin{align*}
        \Delta &= f(n,0) + f(n,n-2) - f(n-1,n-3)\\
        &\;\;\;\;\;\;+ \sum_{j=2}^{n-2}{\underbrace{\left(f(n,j-1)-f(n-1,j-2)\right)}_\text{= \begin{math}f(n,j)\end{math}}}\\
        &= s(n,n-2)-f(n,1)+f(n,n-2)-f(n-1,n-3)
    \end{align*}
    
    Cancelling \begin{math}s(n,n-2)\end{math} from both sides and rearranging yields:
    \begin{displaymath}
        s(n-1,n-3) = f(n,1)+f(n-1,n-3)-f(n,n-2)
    \end{displaymath}
    
    Substituting this into \cref{eq:rec1}:
    \begin{displaymath}
        f(n,1)+f(n-1,n-3)-f(n,n-2) = (n-2)! - f(n-1,n-2)
    \end{displaymath}
    
    But note that, by \cref{lem:reccase2}, we have \begin{math}f(n,1)=f(n,0)-f(n-1,-1)=(n-2)!\end{math}. Thus we can cancel the \begin{math}(n-2)!\end{math} term and rearrange:
    \begin{equation*}
        \begin{split}
            f(n-1,n-2) = f(n,n-2) - f(n-1,n-3)
        \end{split}
    \end{equation*}
    Shifting indices proves the lemma.
\end{proof}

\begin{rem}
    Combining \cref{lem:reccase1}, \cref{lem:reccase2} and \cref{lem:reccase3} allows us to greatly simplify our recurrence relation for \begin{math}f(n,k)\end{math}:
    \begin{equation*}
        f(n,k)= 
        \begin{cases}
        0 & \text{if}\; k<0\\
        (n-2)! & \text{if}\; k=0\\
        f(n,k-1)-f(n-1,k-2) & \text{if}\; 1 <k<n-1\\
        f(n+1,n-1)-f(n,n-2) & \text{if}\; k=n-1\\
        \end{cases}
    \end{equation*}
\end{rem} 

\begin{thm}\label{thm:binomrep}
For \begin{math}0\leq k<n-1\end{math}:
\begin{equation*}
    f(n,k)=\sum_{j=0}^k \binom{k-j}{j}(-1)^j(n-2-j)!
\end{equation*}
\end{thm}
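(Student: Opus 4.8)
The plan is to prove the closed form by induction on $k$, using the streamlined recurrence collected in the Remark after \cref{lem:reccase3} (equivalently, \cref{lem:reccase1} together with \cref{lem:reccase2}): for $0 \le k < n-1$ one has $f(n,0) = (n-2)!$, and $f(n,k) = f(n,k-1) - f(n-1,k-2)$ for $1 \le k < n-1$, under the convention $f(m,-1) = 0$. The key observation is that on the right-hand side of the recurrence the second argument is strictly smaller than $k$ in both terms, and still lies in the admissible range, since $k < n-1$ forces $k-1 < n-1$ and $k-2 < n-2$. Hence a single induction on $k$, with $n$ left as a free parameter, suffices; no separate induction on $n$ is needed.

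For the base case $k=0$, \cref{lem:reccase1} gives $f(n,0) = (n-2)!$, which equals $\binom{0}{0}(-1)^0(n-2)!$, the claimed sum. For the inductive step, fix $k \ge 1$ and assume the formula holds for $k-1$ and $k-2$ at every admissible value of the first argument. Applying \cref{lem:reccase2},
\begin{align*}
f(n,k) &= f(n,k-1) - f(n-1,k-2) \\
&= \sum_{j=0}^{k-1}\binom{k-1-j}{j}(-1)^j(n-2-j)! \;-\; \sum_{j=0}^{k-2}\binom{k-2-j}{j}(-1)^j(n-3-j)!,
\end{align*}
where for $k=1$ the second sum is empty, matching $f(n-1,-1)=0$. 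In the second sum, substitute $j \mapsto j-1$ to rewrite it as $\sum_{j=1}^{k-1}\binom{k-1-j}{j-1}(-1)^{j-1}(n-2-j)!$; then, splitting off the $j=0$ term of the first sum, one obtains
\[
f(n,k) = (n-2)! + \sum_{j=1}^{k-1}\left[\binom{k-1-j}{j} + \binom{k-1-j}{j-1}\right](-1)^j(n-2-j)!.
\]
Pascal's rule gives $\binom{k-1-j}{j} + \binom{k-1-j}{j-1} = \binom{k-j}{j}$ for each $j$; moreover $\binom{k-0}{0}=1$ identifies $(n-2)!$ as the $j=0$ term, and $\binom{k-k}{k} = \binom{0}{k} = 0$ for $k \ge 1$ lets us append the $j=k$ term at no cost. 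This yields $f(n,k) = \sum_{j=0}^{k}\binom{k-j}{j}(-1)^j(n-2-j)!$, completing the induction.

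I do not anticipate any deep obstacle here — the argument is an elementary binomial induction. The only points requiring care are: (i) verifying at each invocation that the recurrence from \cref{lem:reccase2} is used strictly within its stated range $1 \le k < n-1$, and that the induction hypothesis applies to $f(n,k-1)$ and $f(n-1,k-2)$; (ii) carrying out the index shift in the second sum without sign or argument errors (the factorial argument $(n-3-j)!$ becomes $(n-2-j)!$ after shifting); and (iii) reconciling the boundary terms $j=0$ and $j=k$ so that the Pascal merge delivers precisely the index range $0 \le j \le k$ of the claimed formula. A quick check against $f(n,1) = (n-2)!$ and $f(n,2) = (n-2)! - (n-3)!$ confirms the bookkeeping before committing to the final write-up.
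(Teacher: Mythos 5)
Your proposal is correct and follows essentially the same route as the paper: both arguments induct using the recurrence $f(n,k)=f(n,k-1)-f(n-1,k-2)$ from \cref{lem:reccase2} with base case $f(n,0)=(n-2)!$, reindex one of the two sums, and merge via Pascal's identity, absorbing the vanishing $j=k$ term. The only cosmetic differences are that the paper phrases the induction lexicographically on $(n,k)$ where you induct on $k$ alone with $n$ free, and that you shift the second sum upward where the paper shifts the first sum downward.
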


\begin{proof}
For convenience, let \begin{math}g(n,k)\end{math} be the RHS of the above equation. We will induct on the lexicographic ordering of pairs \begin{math}(n,k)\end{math}.

\textit{Base case}: \begin{math}(n,k) = (2,0)\end{math} is true. Additionally, the statement is clearly true whenever \begin{math}k=-1\end{math}.

\textit{Inductive step:} Fix \begin{math}n, k\end{math} with  \begin{math}k \leq n-2\end{math} and suppose for all pairs \begin{math}(2,0) \leq (a,b) < (n,k)\end{math} we have \begin{math}f(a,b) = g(a,b)\end{math}. We will show \begin{math}f(n,k) = g(n,k)\end{math}.

If \begin{math}k=0\end{math}, then \begin{math}g(n,k) = (n-2)! = f(n,k)\end{math}.

Otherwise:
\begin{align*}
    f(n,k) &= f(n, k-1) - f(n-1, k-2) & \text{By \cref{lem:reccase2}}\\
    &= g(n, k-1) - g(n-1, k-2) & \text{By inductive hypothesis}\\
    &= \sum_{j=0}^{k-1} \binom{k-1-j}{j}(-1)^j(n-2-j)!\\
    &\;\;\;\;\;\; - \sum_{j=0}^{k-2} \binom{k-2-j}{j}(-1)^j(n-3-j)! & \text{By definition}
\end{align*}

Extracting the first term of the former sum and re-indexing yields
\begin{align*}
    f(n,k) &= (n-2)! + \sum_{j=0}^{k-2}{\binom{k-2-j}{j+1}(-1)^{j+1}(n-3-j)!}\\
    &\;\;\;\;\;\; - \sum_{j=0}^{k-2}{\binom{k-2-j}{j}(-1)^j(n-3-j)!}\\
    &= (n-2)! - \sum_{j=0}^{k-2} \left[ \binom{k-2-j}{j+1} + \binom{k-2-j}{j} \right] (-1)^j(n-3-j)!
\end{align*}

By Pascal's Identity, this implies
\begin{align*}
    f(n,k) &= (n-2)! - \sum_{j=0}^{k-2} \binom{k-1-j}{j+1} (-1)^j(n-3-j)!\\
    &= \sum_{j=0}^{k-1} \binom{k-j}{j} (-1)^j (n-2-j)!\\
    &= g(n,k)
\end{align*}
\end{proof} 

\begin{thm}\label{thm:f(nn-1)}
We have:
\begin{displaymath}f(n,n-1)=\sum_{j=0}^{n-1}{\binom{n-j}{j}(-1)^j(n-1-j)!}\end{displaymath}
\end{thm}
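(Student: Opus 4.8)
The plan is to derive $f(n,n-1)$ directly from quantities already known in closed form, with no further induction needed. The starting point is that, by \cref{def:s} and \cref{rem:totalsum},
\begin{displaymath}
f(n,n-1) \;=\; s(n,n-1)-s(n,n-2) \;=\; (n-1)! - \sum_{k=0}^{n-2} f(n,k).
\end{displaymath}
Every term $f(n,k)$ on the right has $0\le k\le n-2<n-1$, so \cref{thm:binomrep} applies to each one, and we may replace $f(n,k)$ by $\sum_{j=0}^{k}\binom{k-j}{j}(-1)^j(n-2-j)!$, turning the sum into a double sum over $0\le j\le k\le n-2$.

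Next I would interchange the order of summation, factoring out the part that depends only on $j$, namely $(-1)^j(n-2-j)!$. The remaining inner sum is $\sum_{k=j}^{n-2}\binom{k-j}{j}$, and after the substitution $m=k-j$ this becomes $\sum_{m=0}^{n-2-j}\binom{m}{j}=\binom{n-1-j}{j+1}$ by the hockey-stick identity. Hence
\begin{displaymath}
f(n,n-1) \;=\; (n-1)! \;-\; \sum_{j=0}^{n-2}\binom{n-1-j}{j+1}(-1)^j(n-2-j)!.
\end{displaymath}

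It then remains to identify this with the claimed expression. Observe that $(n-1)!$ is exactly the $j=0$ term of $\sum_{j=0}^{n-1}\binom{n-j}{j}(-1)^j(n-1-j)!$, so it suffices to check that the displayed sum together with its leading minus sign equals $\sum_{j=1}^{n-1}\binom{n-j}{j}(-1)^j(n-1-j)!$. Re-indexing by $i=j+1$ sends $\binom{n-1-j}{j+1}\mapsto\binom{n-i}{i}$, $(n-2-j)!\mapsto(n-1-i)!$, and $-(-1)^j=-(-1)^{i-1}\mapsto(-1)^i$, so the two sums coincide and the theorem follows. The base case $n=2$, where the second diagonal consists of the root alone, is checked by hand: $f(2,1)=0$ and $\binom{2}{0}1!-\binom{1}{1}0!=0$.

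I expect the only delicate points to be bookkeeping: ensuring \cref{thm:binomrep} is invoked only in its stated range $k<n-1$ (which holds since $k\le n-2$ throughout), evaluating the hockey-stick sum correctly, and tracking the sign produced by the shift $i=j+1$. None of these should be a genuine obstacle, so the argument is essentially a short computation built on \cref{rem:totalsum} and \cref{thm:binomrep}.
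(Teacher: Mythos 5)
Your argument is correct, and it takes a genuinely different route from the paper's. The paper proves this theorem by first establishing the three-term relation $f(n,n-1)=f(n+1,n-1)-f(n,n-2)$ (\cref{lem:reccase3}, itself obtained through a somewhat fiddly telescoping computation), then substituting \cref{thm:binomrep} into both terms on the right and finishing with a single application of Pascal's identity. You instead bypass \cref{lem:reccase3} entirely: you start from the complementary-count identity $f(n,n-1)=(n-1)!-\sum_{k=0}^{n-2}f(n,k)$ (which is exactly how Case 2 of \cref{thm:recformula1} begins, and follows from \cref{rem:totalsum}), expand each summand via \cref{thm:binomrep}, interchange the double sum, and collapse the inner sum $\sum_{m=0}^{n-2-j}\binom{m}{j}=\binom{n-1-j}{j+1}$ by the hockey-stick identity; the final reindexing $i=j+1$ and sign bookkeeping all check out, as does the identification of $(n-1)!$ with the $j=0$ term of the target sum. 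The trade-off is that the paper's proof is a one-line substitution once \cref{lem:reccase3} is in hand, whereas yours is slightly longer as a computation but is more self-contained, needing only \cref{rem:totalsum} and \cref{thm:binomrep} as inputs; if one only wanted this theorem, your route would let \cref{lem:reccase3} be dropped from the development. Your closing $n=2$ check is harmless but not needed, since your derivation is a direct computation rather than an induction.
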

\begin{proof}
Recall \cref{lem:reccase3}:
\begin{displaymath}f(n,n-1)=f(n+1,n-1)-f(n,n-2)\end{displaymath}
We replace the RHS with the expressions given by \cref{thm:binomrep}:
\begin{align*}
        f(n,n-1) &= \sum_{j=0}^{n-1}{\binom{n-1-j}{j}(-1)^j(n-1-j)!}\\
        &\;\;\;\;\;\; - \sum_{j=0}^{n-2}{\binom{n-2-j}{j}(-1)^j(n-2-j)!}
\end{align*}
Reindexing the second sum and applying Pascal's Identity, we obtain 
\begin{align*}
    f(n,n-1) =\sum_{j=0}^{n-1}{\binom{n-j}{j}(-1)^j(n-1-j)!}\,,
\end{align*}
as claimed.

\end{proof}

\begin{rem}\label{rem:conjproved}
    The formula given for the sequence \begin{math}a(n)\end{math} of irreducible permutations on \begin{math}n\end{math} letters, A184185, is:
    \begin{displaymath}
        a(n)=\sum_{k=\lceil \frac{n-1}{2}\rceil}^n{ \binom{k+1}{n-k}(-1)^{n-k}k!}
    \end{displaymath}
    Note that this sum is equivalent to \begin{math}f(n+1,n)\end{math}, which can be seen as follows. Consider the sum expansion for \begin{math}f(n+1,n)\end{math}, given by \cref{thm:f(nn-1)}:
    \begin{displaymath} f(n+1,n)=\sum_{j=0}^{n}{\binom{n+1-j}{j}(-1)^j(n-j)!} \end{displaymath}
    Note that all terms with \begin{math}j>n-j\end{math} are zero since the binomial coefficient evaluates to zero, so we are left with:
    \begin{displaymath}f(n+1,n)=\sum_{j=\lfloor \frac{n}{2}\rfloor+1}^{n}{\binom{n+1-j}{j}(-1)^j(n-j)!}\end{displaymath}
    Setting \begin{math}k=n-j\end{math} and summing over the range of values \begin{math}k\end{math} assumes, we note that since \begin{math}n-\left(\lfloor\frac{n}{2}\rfloor+1\right)=\lceil \frac{n-1}{2}\rceil\end{math}, this variable change yields:
    \begin{align*}
    f(n+1,n) &= \sum_{k=\lceil \frac{n-1}{2}\rceil}^n{ \binom{k+1}{n-k}(-1)^{n-k}k!}\\
    &= a(n)
    \end{align*}
    
    By the bijective construction for \cref{thm:permbij}, \begin{math}f(n+1,n)\end{math} is the number of tree-like tableaux of size \begin{math}n\end{math} containing no occupied corners, therefore these are indeed enumerated by \begin{math}a(n)\end{math}. The conjecture is proven.
\end{rem}

\subsection{Row and Column swaps}

To end this section, we will prove that any CNAT can be transformed into an upper-diagonal one through a series of row and column swaps. Note that this will give an alternative proof of \cref{prop:det}, since the determinant of any upper-diagonal matrix is \begin{math}-1\end{math} and any row or column swap multiplies the determinant by \begin{math}-1\end{math}.

\begin{thm}\label{thm:upperred}
For any \begin{math}n\times n\end{math} CNAT \begin{math}M=M_0\end{math}, we can obtain a sequence of valid CNATs \begin{math}\{M_i\}\end{math} where each \begin{math}M_{i+1}\end{math} was the result of a row or column swap in \begin{math}M_i\end{math}, and for which there is an \begin{math}N\end{math} with \begin{math}M_N\end{math} upper-diagonal. 
\end{thm}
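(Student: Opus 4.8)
The plan is to induct on the size $n$ of the CNAT, using the same kind of ``peeling'' argument that appears in the proof of \cref{thm:irred}. Given an $n\times n$ CNAT $M$ with associated permutation $\pi(M)$, the goal is a finite sequence of row/column swaps, each of which keeps the intermediate matrix a valid CNAT, ending at an upper-diagonal CNAT (i.e.\ one whose leaf matrix is anti-diagonal). The natural reduction target is: move the leaf currently in row $1$ so that it sits in column $n$ (and the leaf in column $1$ so that it sits in row $n$), then argue that everything strictly inside the top-left $(n-1)\times(n-1)$ block is a CNAT of size $n-1$ to which the inductive hypothesis applies. The base case $n=1$ (and $n=2$) is immediate since every such CNAT is already upper-diagonal.

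First I would record the crucial stability lemma: if $M$ is a CNAT and we swap two adjacent rows $i$ and $i+1$ (or two adjacent columns), the result is again a CNAT provided the swap does not destroy the parent structure --- concretely, it suffices that neither row contains the unique vertex that is the ``precursor'' forcing a vertex in the other row. I expect the clean statement to be: one may always swap row $i$ with row $i+1$ and stay a valid CNAT \emph{unless} there is a vertex in row $i$ that is the parent (via a column) of a vertex in row $i+1$, or symmetrically; and by the non-ambiguity/completeness structure one can always find \emph{some} admissible adjacent transposition that moves a chosen leaf toward its target row, because the obstruction can only occur on one side. Establishing precisely which adjacent swaps are admissible, and checking that the composite of admissible swaps genuinely transports the row-$1$ leaf to column $n$ without getting stuck, is where the real work lies.

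Next I would set up the induction itself. After using admissible swaps to bring the leaf of row $1$ into position $(1,n)$ and the leaf of column $1$ into position $(n,1)$, the root $(1,1)$ has its two children along row $1$ and column $1$; by the CNAT structure, removing row $1$ and column $1$ (equivalently, looking at the submatrix $m_{i,j}$ with $2\le i,j\le n$ after deleting the now-cleared first row and column) yields a CNAT $M'$ of size $n-1$. Apply the inductive hypothesis to $M'$ to get a sequence of admissible swaps making $M'$ upper-diagonal; lift each of these swaps back to $M$ (a swap of rows $i,j$ in $M'$ becomes a swap of rows $i{+}1,j{+}1$ in $M$, and one must check the lifted swap remains admissible in $M$ --- it does, because the first row and column are already ``cleared'' and play no role in the obstruction). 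The concatenation of the initial transporting swaps and the lifted swaps puts $M$ into upper-diagonal form, completing the induction.

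The main obstacle I anticipate is the admissibility bookkeeping in step one: proving that from any configuration one can \emph{always} make progress moving the chosen leaf, i.e.\ that the set of admissible adjacent transpositions is never empty in a way that blocks transport. The likely resolution is a minimal-counterexample or extremal argument --- pick the offending adjacent pair of rows closest to the target; the vertex causing the obstruction must itself have a parent/child relationship that, combined with completeness and the fact that each row/column has exactly one leaf, forces an admissible swap elsewhere that still decreases the distance of the target leaf to row $1$ (or, more carefully, decreases a suitable potential function such as the row-index of the row-$1$ leaf together with the column-index of the column-$1$ leaf). One should also double-check that transporting the row-$1$ leaf does not disturb the already-placed column-$1$ leaf, which is why it is cleanest to phrase the potential as a single monotone quantity and argue it strictly decreases under a well-chosen admissible swap at each stage. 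Once that lemma is in hand, the inductive skeleton above is routine.
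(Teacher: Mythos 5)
There is a genuine gap at the heart of your inductive step: the claim that after transporting the two extremal leaves, ``removing row $1$ and column $1$ \dots yields a CNAT $M'$ of size $n-1$'' is false in general. Row $1$ is a chain of row-children running from the root to the row-$1$ leaf, and every internal vertex on that chain other than the root has a column-child dangling below it into the interior; symmetrically for column $1$. Deleting the first row and column therefore orphans one vertex for each non-root internal vertex of row $1$ and of column $1$, producing a \emph{forest} of smaller CNATs rather than a single one (the first row and column are not ``cleared'' by your transport step, which only relocates the leaves, not the internal vertices). Since the inductive hypothesis cannot be applied to the interior as a single object, the skeleton collapses. The paper's proof confronts exactly this: it deletes only the leftmost column (after moving its leaf to the bottom row), observes that the resulting components $F_i$ occupy pairwise disjoint sets of rows and columns, upper-diagonalizes each $F_i$ by induction, and then supplies a separate ``interweaving'' argument --- a carefully ordered sequence of adjacent column swaps, processing rows $n-1, n-2, \dots, 1$ and using the disjointness of the $F_i$ to certify validity --- to merge the pieces into one upper-diagonal CNAT. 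That recombination step is the real content of the theorem and is entirely absent from your proposal.

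On the other hand, you substantially overestimate the difficulty of the transport step. The leaf of column $1$ has its parent above it in column $1$, so its row can contain no other vertex (any other vertex in that row would force a second leaf of that row further right); hence the adjacent row swaps moving that row to the bottom are automatically valid, with no potential-function or minimal-counterexample argument needed. The same observation disposes of your worries about admissible transpositions ``getting stuck.'' Your lifting claim (that swaps valid in $M'$ remain valid in $M$ because the first row and column ``play no role'') would also need real justification even in a corrected setup, since a column swap in $M$ moves the row-$1$ entries of those columns and can reorder the internal vertices along row $1$.
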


\begin{proof}
We will prove the existence of such row and columns swaps by induction on \begin{math}n\end{math}. Note that a sequence of swaps always exists for \begin{math}n=1, n=2\end{math}.

Firstly, let the leaf in the leftmost column be in row \begin{math}k\end{math}. Note that the only vertex in row \begin{math}k\end{math} is the leaf, and thus we can perform the valid sequence of row swaps
\begin{displaymath}k \leftrightarrow k+1, k+1 \leftrightarrow k+2, \dots, n-1 \leftrightarrow n\end{displaymath}
in order to move the row containing the first column's leaf down to the bottom.

Next, consider the effect of deleting the leftmost column of the CNAT, as well as all edges to vertices in the column. We obtain a forest of smaller CNATs \begin{math}F_i\end{math} (although each is embedded in the larger \begin{math}n\times n\end{math} grid, i.e. Minimality is not satisfied) with any two occupying disjoint rows and columns. We can flatten the labels of each \begin{math}F_i\end{math} (so that Minimality is satisfied), and thus by induction each of the \begin{math}F_i\end{math} can be transformed to upper diagonal form through row and column swaps, so that for any two leaves \begin{math}m_{a,b}, m_{c,d}\end{math} in an \begin{math}F_i\end{math}, if \begin{math}a>c\end{math} then \begin{math}b<d\end{math}. After this, the final upper-diagonal CNM is constructed by "interweaving" column swaps as follows.

Consider the column \begin{math}c \geq 2\end{math} containing the leaf in row \begin{math}n-1\end{math}, and the \begin{math}F_i\end{math} that this leaf belongs to. Since this leaf is the bottommost leaf of \begin{math}F_i\end{math} and \begin{math}F_i\end{math} is upper-diagonal, we must have the root of \begin{math}F_i\end{math} being in the same column \begin{math}c\end{math}. Thus any columns to the left of \begin{math}c\end{math} will not be part of the same \begin{math}F_i\end{math}, therefore we can perform the column swaps \begin{math}c \leftrightarrow c-1, c-1 \leftrightarrow c-2, \cdots,  3 \leftrightarrow 2\end{math} so that the leaf in row \begin{math}n-1\end{math} is correctly in column \begin{math}2\end{math}. All of these column swaps are valid because the \begin{math}F_i\end{math} have disjoint rows and columns, and we did not change any vertices in the first column.

Similarly, consider the column \begin{math}d\end{math} containing the leaf in row \begin{math}n-2\end{math}, and its \begin{math}F_i\end{math}. Note that \begin{math}d\geq 3\end{math} since we already fixed the leaf in column \begin{math}2\end{math}. If this \begin{math}F_i\end{math} has not yet been considered, then the same logic as above applies and we can do the sequence of column swaps \begin{math}d \leftrightarrow d-1, \cdots, 4 \leftrightarrow 3\end{math} to send the leaf in row \begin{math}n-2\end{math} to column \begin{math}3\end{math}. On the other hand if this \begin{math}F_i\end{math} is the same as before, then the same sequence of swaps is still valid because there are no vertices of \begin{math}F_i\end{math} between column \begin{math}2\end{math} (the last column considered) and \begin{math}d\end{math}. This follows from the fact that \begin{math}F_i\end{math} is upper diagonal and the leaves at columns \begin{math}2\end{math} and \begin{math}d\end{math} are on adjacent rows.

This logic can be continued for rows \begin{math}n-3, n-4, \cdots, 2, 1\end{math} at which point the upper-diagonal CNAT will have been constructed.
\end{proof}

\begin{example}
In \cref{fig:rowcolswap} below, we perform the algorithm as described on the 6x6 CNAT in a). Firstly, the leftmost leaf (highlighted in red) is moved to the bottom by swapping rows 4 and 5, then 5 and 6. Then each of the smaller CNATs obtained by deleting the left column are inductively turned into upper-diagonal form - for this example, there are 2 such smaller CNATs highlighted in blue and green, and the blue one is already upper-diagonal. Finally, we interweave the two smaller CNATs together, fixing the leaf in the 5th row with column swaps (in this case by swapping columns 2 and 3), then the 4th row (columns 4 and 5, then 3 and 4), then the 3rd row (already correct), then the 2nd row (columns 4 and 5), then the 1st row is correct.
\end{example}

\begin{figure}[H]
    \centering
    \begin{subfigure}[b]{0.5\textwidth}
        \centering
        \includegraphics{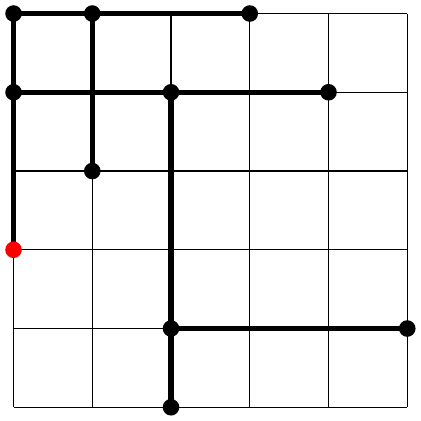}
        \caption{A \begin{math}6 \times 6\end{math} CNAT}
    \end{subfigure}%
    \begin{subfigure}[b]{0.5\textwidth}
        \centering
        \includegraphics{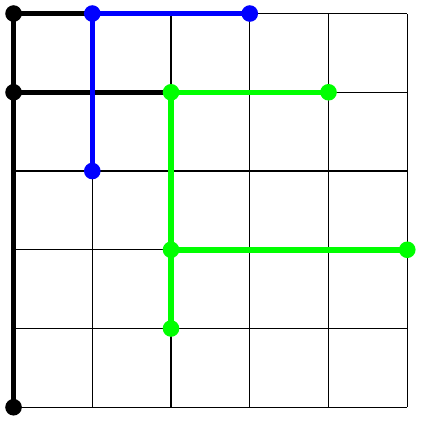}
        \caption{The leftmost leaf moves to the bottom}
    \end{subfigure}
    
    \vspace{10mm}
    \begin{subfigure}[b]{0.5\textwidth}
        \centering
        \includegraphics{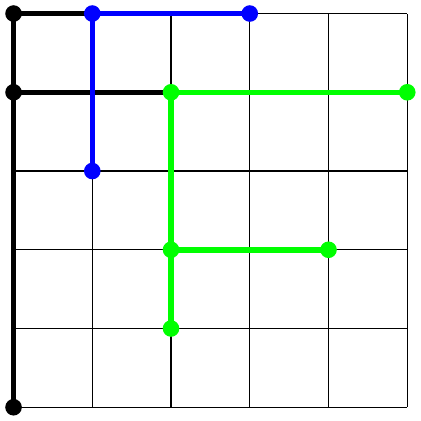}
        \caption{Inductive step}
    \end{subfigure}%
    \begin{subfigure}[b]{0.5\textwidth}
        \centering
        \includegraphics{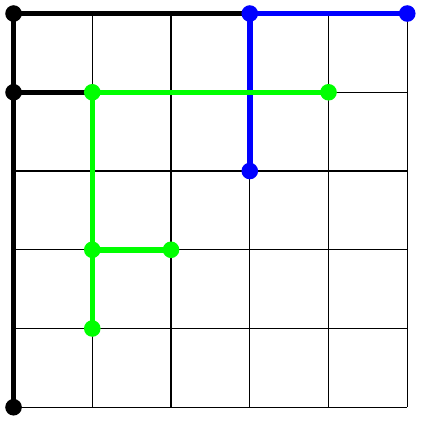}
        \caption{Interweaving}
    \end{subfigure}
    \caption{The row and column swapping process}
    \label{fig:rowcolswap}
\end{figure}

\begin{rem}
We believe that studying these upper-diagonal representatives under an equivalence class may lead to new insight into the underlying structure of CNMs and in particular, provide an opportunity for proving \cref{conj:detparity}.
\end{rem}

\section{The numbers \begin{math}b(n,k)\end{math}}\label{sec:bnk}

\begin{defn}\label{def:bkn}
    Let \begin{math}b(n,k)\end{math} be the number of distinct permutations of \begin{math}n\end{math} letters giving rise to exactly \begin{math}k\end{math} CNATS.
\end{defn}
\begin{thm}[Lower bound on \begin{math}b(n,k)\end{math}]
    Let the numbers \begin{math}b(n,k)\end{math} defined as above. We have the following inequality:
    \begin{displaymath}b(n+1,k)\geq 2\cdot b(n,k)\end{displaymath}
\end{thm}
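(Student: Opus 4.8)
The plan is to exhibit, for each permutation $\sigma \in S_n$ giving rise to exactly $k$ CNATs, at least two distinct permutations in $S_{n+1}$ that each also give rise to exactly $k$ CNATs, in such a way that the resulting map from $S_n \times \{1,2\}$ (or from pairs (permutation, choice of two extensions)) into $S_{n+1}$ is injective. The natural source of the factor $2$ is Remark 3.5 (\cref{rem:twochoices}): in the constructive proof of sufficiency of \cref{thm:irred}, each enclosed vertex $v_i$ admits an independent binary choice (connect left or connect up). So the idea is to find an operation that enlarges an $n\times n$ permutation matrix to an $(n+1)\times(n+1)$ one, introduces exactly one new ``free binary choice'' in the CNAT-construction while preserving the count of completions contributed by the old structure, and does this in two genuinely different ways.

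Concretely, I would first recall from \cref{thm:irred} that only irreducible permutation matrices arise as leaf matrices, and that the number of CNATs with a given irreducible leaf matrix $M$ equals $2^{(\text{number of free } v_i \text{ choices across all steps of the construction})}$ — more precisely, it is a product over steps of $2^{(\#v_i\text{ at that step})}$. The key structural step is to describe an insertion operation: given $\sigma$ on $\{1,\dots,n\}$ with leaf matrix $M$, form $\sigma'$ on $\{1,\dots,n+1\}$ by inserting a new row and column so that the new leaf sits one position below-and-left (or above-and-right) of an appropriately chosen existing leaf — for instance, splitting off the rightmost or bottom-most leaf $l_x$ or $l_y$ of $M$. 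One checks that (i) $\sigma'$ is still irreducible, so it arises as some leaf matrix, and (ii) running the construction of \cref{thm:irred} on $\sigma'$, the first step now encloses exactly one new vertex (the image of the old extremal leaf) giving one extra independent binary choice, while the remaining steps reproduce exactly the construction for $\sigma$; hence $\#\text{CNATs}(\sigma') = 2\cdot\#\text{CNATs}(\sigma) $... which is too strong. So instead I would pick the insertion to not add a free choice but rather preserve $k$: insert the new leaf in the unique position forced to be extremal (e.g. extend at the corner so that the new leaf is the new $l_x$ or $l_y$ and is immediately consumed as $p$ without enclosing anything new). That gives $\#\text{CNATs}(\sigma')=\#\text{CNATs}(\sigma)=k$. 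Doing this at the ``$x$-side'' and at the ``$y$-side'' yields two distinct $\sigma'$, both with exactly $k$ CNATs, and distinctness of these two plus injectivity over varying $\sigma$ follows because one can recover $\sigma$ and the side by locating and deleting the new extremal leaf.

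The main obstacle I anticipate is verifying the two properties above rigorously: that the chosen insertion keeps irreducibility (this should follow from an argument parallel to the submatrix analysis in \cref{thm:irred} and \cref{fig:submatrix}, using that $\sigma$ is irreducible) and, more delicately, that it does not change the number of completions — i.e. that the step of the construction at which the new leaf is processed offers \emph{no} free $v_i$ choice and leaves the reduced matrix $M'$ equal (up to relabelling) to the one obtained from $\sigma$. One must also confirm that the two constructions (left-side and right-side insertion) never collide for a single $\sigma$ and never collide across different $\sigma$, which is where an explicit inverse (delete the distinguished new leaf, flatten labels) is needed. I would organize the write-up as: (1) recall the multiplicative ``$2^{\#\text{choices}}$'' description of $\#\text{CNATs}(\sigma)$; (2) define the two insertion maps and check irreducibility; (3) check the CNAT-count is preserved; (4) check joint injectivity via the deletion inverse; (5) conclude $b(n+1,k)\ge 2\,b(n,k)$.
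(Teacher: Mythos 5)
Your overall strategy coincides with the paper's: extend each permutation counted by $b(n,k)$ at the corner of its leaf matrix in two distinct ways so that the number of CNATs is unchanged, and invert by deleting the new row and column. However, there is a genuine gap in the step you yourself flag as the delicate one (count preservation), and it comes from your opening premise. You assert that the number of CNATs with a given irreducible leaf matrix equals the product over steps of $2^{\#v_i}$, i.e.\ the number of outcomes of the construction in the proof of \cref{thm:irred}, and you then argue that $\sigma'$ has $k$ CNATs because running that construction on $\sigma'$ adds no new free choice and reduces to the run on $\sigma$. But that construction produces only \emph{some} of the CNATs with a given leaf matrix: each enclosed $v_i$ is forced to attach either to the column of $l_x$ or to the row of $l_y$, whereas in an arbitrary CNAT its parent may sit elsewhere. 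Consequently the true count is in general strictly larger than the product of powers of two, and need not be a power of two at all. Concretely, the anti-diagonal leaf matrix of size $4$ admits $3!=6$ CNATs by \cref{cor:totalUDCNM}, while the construction yields only $2^2=4$; and \cref{tab:bnk} records permutations realizing exactly $3$, $6$, $7$ or $9$ CNATs. So showing that the construction behaves identically on $\sigma$ and $\sigma'$ does not show that $\sigma'$ gives rise to exactly $k$ CNATs.

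The repair is to argue at the level of \emph{all} CNATs rather than those produced by the construction, which is what the paper does. In any CNAT whose leaf matrix is the extended one, the new leaf in row $n+1$ and the new leaf in column $n+1$ can only be attached through the single forced vertex occupying the position of the displaced extremal leaf (no internal vertex can live in the last row or last column, since it would need a child below, respectively to its right); hence deleting the new row and column and restoring that vertex to a leaf is a bijection between CNATs of $\sigma'$ and CNATs of $\sigma$, giving exactly $k$ on both sides. With that substitution, your steps on irreducibility and on joint injectivity via the deletion inverse go through, and the argument becomes essentially the paper's.
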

\begin{proof}
If \begin{math}b(n,k)=0\end{math}, the inequality holds trivially.\\
Thus, assume \begin{math}b(n,k)>0\end{math}. We will prove that every \begin{math}n\times n\end{math} permutation on \begin{math}n\end{math} letters yielding \begin{math}k\end{math} CNATS can be extended in at least two ways to a permutation on \begin{math}n+1\end{math} letters, also resulting in \begin{math}k\end{math} CNATS, proving the result.

Consider the permutation's associated CNAT and in this the leaves on the outermost row or column (See \cref{fig:bound}). By \cref{thm:irred}, exactly two such leaves exist, marked red. Now replace these leaves with zeroes in the CNAT and add ones (blue) on the outermost row and column as in \cref{fig:bound}. Clearly, there is only one way to connect the two new points to form a valid CNAT, this being using the red point.
\begin{figure}[H]
    \begin{subfigure}{0.5\textwidth}
        \centering
        \includegraphics{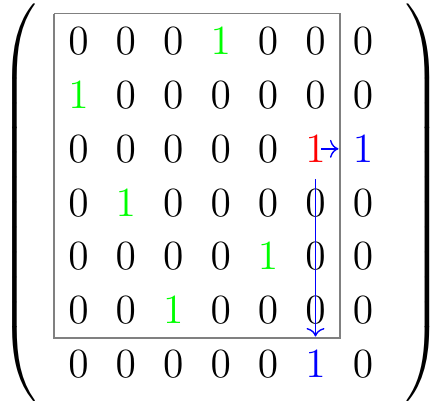}
        \caption{One possible choice}
        \label{fig:bound1}
    \end{subfigure}%
    \begin{subfigure}{0.5\textwidth}
        \centering
        \includegraphics{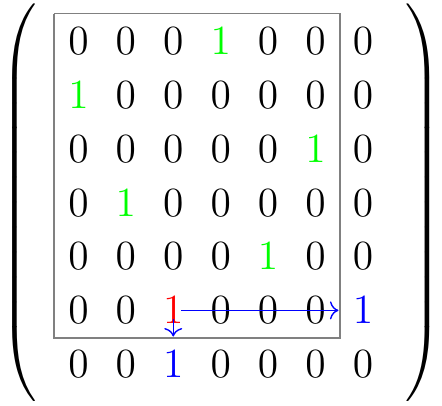}
        \caption{The other choice}
        \label{fig:bound2}
    \end{subfigure}
    \caption{Going from \begin{math}n\end{math} to \begin{math}n+1\end{math}}
    \label{fig:bound}
\end{figure}

Thus every \begin{math}n\times n\end{math} leaf matrix yielding \begin{math}k\end{math} CNATS can be extended into two \begin{math}(n+1)\times (n+1) \end{math} matrices, proving that \begin{math}b(n+1,k)\geq 2\cdot b(n,k)\end{math}.
\end{proof}

Note that \cref{cor:totalunique} established that \begin{math}b(n,0)=2^{n-2}\end{math}, while \cref{cor:totalUDCNM} proved that \begin{math}b(n,(n-1)!)\geq 1\end{math}. Numerical evidence and heuristic argumentation lead us to strongly believe that the following conjecture holds true:

\begin{conj}\label{conj:maxcnats}
    The maximum number of CNATs generated by a permutation \begin{math}\pi\end{math} on \begin{math}n\end{math} letters is \begin{math}(n-1)!\end{math}. This value is only attained when \begin{math}\pi\end{math} is equal to the permutation \begin{math}\left(n, n-1, n-2,\ldots 2,1\right)\end{math}.
\end{conj}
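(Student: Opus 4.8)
The plan is to induct on $n$, proving for every $\pi\in S_n$ that $N(\pi)\le (n-1)!$ with equality exactly for $w_0:=(n,n-1,\dots,1)$, where $N(\pi)$ denotes the number of CNATs whose leaf matrix is the permutation matrix of $\pi$; this is precisely \cref{conj:maxcnats}. By \cref{thm:irred} only irreducible $\pi$ matter (otherwise $N(\pi)=0$), and $N(w_0)=(n-1)!$ is \cref{cor:totalUDCNM}. It is also worth recording the equivalent form coming from the bijection of \cite{dgg19} used in \cref{thm:permbij}: $N(\pi)$ is the number of weight-$0$ fully-tiered trees carrying the tiering forced by $P_\pi$ (which is the identity tiering exactly when $\pi=w_0$), so the statement asserts that the identity tiering is the unique maximiser of the number of weight-$0$ fillings. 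The cases $n\le 2$ are immediate.

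For the inductive step I would exploit the rigidity of the extreme row and column. In any CNAT with irreducible leaf matrix the leaf $l_x$ of the rightmost column and the leaf $l_y$ of the bottom row are distinct, and completeness forces the rightmost column and the bottom row to contain no internal vertex (an internal vertex there would need a child further right, resp. further down). A correct deletion then peels off $l_x$ together with the appropriate part of the ``ray'' it determines in the tree, in the spirit of the ribbon moves for tree-like tableaux, producing a CNAT of size $n-1$ whose leaf matrix is $P_{\pi'}$ with $\pi'$ the flattening of $\pi$ restricted to $\{1,\dots,n-1\}$; in particular $\pi'$ is again irreducible, and since the deletion is deterministic its image lies in a single leaf-matrix class. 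The crux is the fiber estimate: for each size-$(n-1)$ CNAT $M'$, at most $n-1$ CNATs with leaf matrix $P_\pi$ reduce to $M'$, since rebuilding amounts to reinserting $l_x$ (whose position is prescribed by $\pi$) together with its parent, with at most $n-1$ admissible choices for how the parent is attached. Combining, $N(\pi)\le (n-1)\,N(\pi')\le (n-1)(n-2)!=(n-1)!$ by the inductive hypothesis.

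For equality, $N(\pi)=(n-1)!$ forces $N(\pi')=(n-2)!$, hence $\pi'=(n-1,n-2,\dots,1)$ by induction, and forces all $n-1$ reinsertions to be admissible for every $M'$ in the image. Unwinding ``$\pi'$ is the reverse permutation'' together with ``every reinsertion is admissible'' should leave $\pi=w_0$ as the only possibility: $w_0$ is exactly the leaf matrix for which $l_x$ lies in the top row, leaving all $n-1$ rows free for the reinserted parent, while any other irreducible $\pi$ with $\pi'$ reverse either loses an admissible reinsertion or fails to make $\pi'$ reverse. This would close the induction.

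The principal difficulty is to define the reduction honestly. The naive ``delete the rightmost column and suppress the parent of $l_x$'' need not return a CNAT (the parent can be the root, which admits no suppression, and otherwise the row of $l_x$ may be left leafless), so one has to pin down the right ribbon-type deletion and verify it is well defined, deterministic, compatible with leaf matrices, and irreducibility-preserving. Establishing the fiber bound $n-1$ and characterising equality is then the real work, and the natural way to organise it is to upgrade the encoding behind \cref{thm:permbij}: represent each CNAT with leaf matrix $P_\pi$ by a sequence $(a_1,\dots,a_{n-1})$ with $1\le a_i\le n-i$ recording the successive reinsertion choices, and show that every such sequence occurs if and only if $\pi=w_0$. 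An alternative is to prove that the canonical upper-diagonalisation algorithm of \cref{thm:upperred} is injective on each leaf-matrix class and surjects onto the upper-diagonal CNMs only for $\pi=w_0$, which would yield both the bound and the equality case; but injectivity there appears to require a delicate analysis of how valid row and column swaps can be undone.
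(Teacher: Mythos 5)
Before anything else: \cref{conj:maxcnats} is stated in the paper as a conjecture. The paper gives no proof of it --- it records numerical evidence and attributes a proof to the later preprint \cite{sel23} --- so there is no in-paper argument to compare yours against, and your attempt must stand on its own. As written it does not: the three ingredients that would constitute a proof (a well-defined size-reducing map, the fiber bound $\le n-1$, and the equality analysis) are each announced rather than carried out, and you say so yourself (``the principal difficulty is to define the reduction honestly'', ``establishing the fiber bound $n-1$ and characterising equality is then the real work''). What you have is a plausible inductive scheme, $N(\pi)\le (n-1)\,N(\pi')$ plus an equality case, with none of its load-bearing steps established.

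To be concrete about why each gap is substantial. (i) The reduction is never defined: deleting the last row and last column destroys two leaves and leaves a row and a column leafless, so the result is not a permutation leaf pattern of size $n-1$; deleting the extreme column together with the row of its unique leaf orphans one child of that leaf's parent, violating completeness; the ``ribbon-type'' repair you gesture at is exactly what must be written down and verified to be deterministic, CNAT-valued, and compatible with flattening the leaf matrix. (ii) Without the map the fiber bound cannot be checked, and the two devices you point to will not supply it directly: the insertion procedure in the proof of \cref{thm:irred} is not exhaustive (for the $4\times 4$ anti-diagonal leaf matrix it produces only $4$ CNATs, whereas \cref{cor:totalUDCNM} gives $6$ --- it misses those with an internal vertex at position $(2,2)$), so it cannot serve as the inverse of your deletion; and the sequence encoding behind \cref{thm:permbij} rests on \cref{lem:udftt}, which is specific to the identity tiering, so the ``$1\le a_i\le n-i$'' count does not transfer to the tierings arising from a general $\pi$ under the bijection of \cite{dgg19}. (iii) The equality case is pure assertion: already for $n=4$ the irreducible permutations $3421$ and $3241$ also flatten (remove the extreme column and the row of its leaf) to the reverse permutation of $S_3$, so ``$\pi'$ is reverse'' does not single out $w_0$, and the claim that every other candidate ``loses an admissible reinsertion'' is precisely the uniqueness statement you are supposed to prove.
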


Further numerical experimentation, as seen in \cref{tab:bnk}, led us to conjecture the following:

\begin{conj}\label{conj:b(n2)}
    For all \begin{math}k\geq 1\end{math}, \begin{math}b(n,2)=b(n+1,3)\end{math}.
\end{conj}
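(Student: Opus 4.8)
The plan is to prove the stronger statement that there is an explicit bijection between the $b(n,2)$ permutations of $[n]$ whose leaf matrix has exactly two CNATs (call this set $P_{n,2}$) and the $b(n+1,3)$ permutations of $[n+1]$ whose leaf matrix has exactly three (call it $P_{n+1,3}$). The first step is to upgrade \cref{thm:totalzero} into a recursion for $c(\sigma)$, the number of CNATs with leaf matrix $\sigma$. Note that in any $n\times n$ CNAT the last row and last column contain no internal vertex — an internal vertex must have a child immediately to its right in its row, which is impossible in the last column, and symmetrically for the last row — so these two lines consist solely of the two extreme leaves $l_{\mathrm{bot}}$ and $l_{\mathrm{right}}$. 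Peeling off the last row and column yields two cases: either $l_{\mathrm{bot}}$ and $l_{\mathrm{right}}$ share a common parent $p$, in which case the result is a genuine CNAT of size $n-1$ whose leaf matrix is the permutation obtained by deleting $n$ from $\sigma$, with $p$ now occupying the L-subset vacated by the deleted leaves; or they have distinct parents, in which case the peeled object is an $(n-1)\times(n-1)$ non-ambiguous configuration carrying two marked half-leaves (vertices with a single child). Summing over both branches, refined by whether the newly created leaf lies on the diagonal and, in the second branch, by tracking how many rows/columns of each initial submatrix are empty, should give the recursion; the $\left(\Leftarrow\right)$ argument of \cref{thm:totalzero}, which builds $\sigma$ up one L-subset at a time, indicates how to organise this compatibly with \cref{def:Lset}.

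The second step is to read off the two sets. Computations for small $n$ suggest that when every L-subset of $\sigma$ other than the $n$-th is a singleton one has $c(\sigma)=2^{d(\sigma)}$, where $d(\sigma)$ counts those L-subsets whose vertex lies on the diagonal — so \cref{thm:totalzero} is the case $d=0$, and $P_{n,2}$ should be exactly the leaf matrices all of whose non-final L-subsets are singletons with precisely one of them on the diagonal. Since $3$ is not a power of two, $P_{n+1,3}$ cannot have this shape: such a $\sigma$ must contain a non-singleton L-subset (empty, in the size-$4$ example $3412$, whose L-subset profile is $0,2,2$), and untangling $P_{n+1,3}$ is exactly where the new distinct-parent branch of the recursion is needed.

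The third step is the bijection itself. Given $\sigma\in P_{n,2}$, locate its unique diagonal L-subset, say the $k$-th, and split it into a size-$(n+1)$ leaf matrix by inserting one new index so that the former diagonal vertex becomes an off-diagonal two-vertex L-subset in a canonical position while exactly one other L-subset is emptied; the inverse map merges the distinguished empty and two-vertex L-subsets of a $\tau\in P_{n+1,3}$ back into a single diagonal L-subset. One then checks, using the recursion of the first step, that the image has $c=3$ (respectively $c=2$), that the two maps are mutually inverse, and that they are compatible with the extension operation underlying the bound $b(n+1,k)\ge 2b(n,k)$; equivalently, it suffices to prove $b(n+1,3)-2b(n,3)=b(n,2)-2b(n-1,2)$ and induct (base cases $b(2,2)=0=b(3,3)$ and $b(3,2)=1=b(4,3)$), which reduces the conjecture to a bijection between the two "non-extension" subsets.

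The hard part is the first step in the distinct-parent case. \cref{thm:totalzero} goes through precisely because there the peeling is forced; once $c\ge 2$, distinct parents genuinely occur and the resulting half-leaves couple the choices at different scales — this is exactly why $c$ can take values such as $6$ that are not powers of two, and why merely counting diagonal L-subsets, which already nearly settles $P_{n,2}$, is not enough to describe $P_{n+1,3}$. Everything downstream, and in particular the precise canonical form of the splitting map in the third step, depends on first obtaining a sufficiently clean recursion for $c(\sigma)$.
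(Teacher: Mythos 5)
First, a point of calibration: the paper does not prove this statement. It appears only as \cref{conj:b(n2)}, and the authors note at the end of \cref{sec:bnk} that it was established in the later preprint of \cite{sel23}. There is therefore no in-paper proof to compare against, and your proposal has to stand on its own. It does not: it is a research plan with the decisive steps left open, and you say as much yourself.

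The concrete gaps are these. (a) Every later step is conditional on a recursion for $c(\sigma)$, the number of CNATs over a fixed leaf matrix, which you never derive; you explicitly defer the distinct-parent branch as ``the hard part'', and that branch is precisely the one needed to analyse $P_{n+1,3}$, since (as you correctly observe) $3$ is not a power of $2$ and so those leaf matrices cannot have all non-final L-subsets singletons. (b) The identity $c(\sigma)=2^{d(\sigma)}$ for leaf matrices with singleton non-final L-subsets is asserted from small cases only; moreover, even granting it, identifying $P_{n,2}$ with the $d(\sigma)=1$ stratum requires the converse direction --- that no $\sigma$ containing an empty or two-element L-subset has $c(\sigma)=2$ --- which you do not address. (The count $(n-2)\,2^{n-3}$ does match A001787, but a numerical match is not a proof.) (c) The bijection of your third step is not actually a map: ``inserting one new index so that the former diagonal vertex becomes an off-diagonal two-vertex L-subset in a canonical position'' specifies neither the inserted index nor the position, so injectivity, surjectivity onto $P_{n+1,3}$, and the claim $c=3$ on the image cannot be checked. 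Your fallback reduction to $b(n+1,3)-2b(n,3)=b(n,2)-2b(n-1,2)$ is structurally a valid induction given the extension bound, but it merely renames the problem, since the bijection between the two ``non-extension'' subsets is again not constructed. The sound ingredients --- that the last row and column of a CNAT contain only the two extreme leaves, and the L-subset framework of \cref{def:Lset} and \cref{thm:totalzero} as an organising device --- are good starting points, but as written nothing beyond the (already known) case $c(\sigma)=1$ is actually established.
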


\begin{conj}\label{conj:none5}
    For all \begin{math}n\end{math}, there is no permutation \begin{math}\pi\end{math} on \begin{math}n\end{math} letters which gives rise to exactly \begin{math}5\end{math} CNATS, i.e. \begin{math}b(n,5)=0\end{math}.
\end{conj}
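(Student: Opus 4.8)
The plan is to argue by strong induction on $n$. Write $c(\pi)$ for the number of CNATs whose leaf matrix is the permutation matrix of $\pi$, so that $b(n,5)=\#\{\pi\in S_n:c(\pi)=5\}$; since $c(\pi)=0$ for reducible $\pi$ by \cref{thm:irred}, it suffices to show $c(\pi)\neq 5$ for every irreducible $\pi\in S_n$, assuming the same for all smaller sizes. The engine is the recursive ``step'' of the construction proving sufficiency in \cref{thm:irred}.

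That step, applied to an irreducible $\pi$, picks out the rightmost leaf $l_x$ and bottom-most leaf $l_y$, adjoins their common parent $p=(X(l_y),Y(l_x))$, weaves in the set $V=\{v:Y(v)>Y(l_x)\text{ and }X(v)>X(l_y)\}$ of leaves strictly below-and-right of $p$, and leaves behind a strictly smaller irreducible leaf matrix $M'$ (on the rows and columns not used up by $l_x$, $l_y$, $V$, but containing $p$). When $V=\varnothing$ this reads $c(\pi)=c(M')$ with $|M'|=n-1$. When $1\le|V|<n-2$, both $M'$ (of size $2\le|M'|\le n-1$) and the ``bordered interior'' $\widehat V$ of $V$ (of size $|V|+2\le n-1$) are strictly smaller than $\pi$, and the first real task is to express $c(\pi)$ in terms of $c(M')$ and $c(\widehat V)$. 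The clean guess $c(\pi)=c(\widehat V)\,c(M')$ is false in general: the two parts interact through the single shared row $Y(l_x)$ and column $X(l_y)$, so the shape of that row and column in a CNAT of $M'$ changes how many weavings of $V$ are admissible. One must therefore establish the corrected identity — plausibly a sum over the CNATs of $M'$ weighted by admissible-weaving counts — and verify that it never assembles $\pi$'s CNATs into a set of exactly five. Granting such a recursion, primality of $5$ (or, equivalently, a direct ``$c(\pi)\ge 5\Rightarrow c(\pi)\ge 6$'' argument) should close this case via the inductive hypothesis.

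It then remains to treat the indecomposable case $\pi(1)=n$, $\pi(n)=1$, where $V$ is all non-extremal leaves, $M'$ is trivial, and $c(\pi)$ is exactly the number of legal weavings of the interior pattern $\tau\in S_{n-2}$. Here $2^{n-2}\le c(\pi)\le(n-1)!$, with the lower value attained for $\tau=\mathrm{id}$ (every up/left choice legal) and the upper for $\tau$ anti-diagonal (by \cref{cor:totalUDCNM}). I would run a secondary induction on $n-2$: peel off an extremal interior leaf, split on whether it attaches up (into row $Y(l_x)$) or left (into column $X(l_y)$), and express the weaving count through strictly smaller weaving counts and $c$-values with explicit nonnegative-integer multipliers, chosen so that neither the multipliers nor their partial products hit $5$.

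I expect the indecomposable case, together with pinning down the corrected recursion in the decomposable case, to be the real obstacle — indeed this is why the statement is still only a conjecture. Unlike \cref{thm:totalzero}, where the CNAT count is controlled by clean binary $L$-subset choices, the interference between the two parts of the construction makes $c(\pi)$ neither a power of $2$ nor a transparent product — the spread $2^{n-2},\dots,(n-1)!$ already rules out a one-line divisibility argument — so excluding the single value $5$ demands a genuine classification of the possible local contributions. A minor additional obstacle is the bookkeeping for the base cases (small $n$, small interior patterns $\tau$), which should be checked against the data in \cref{tab:bnk}.
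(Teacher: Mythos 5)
First, a point of calibration: the paper does not prove this statement. It is stated as \cref{conj:none5}, supported only by the computations in \cref{tab:bnk} (which verify $b(n,5)=0$ for $n\le 8$), and the authors note at the end of \cref{sec:bnk} that a proof appears only in a later preprint. So there is no in-paper argument to compare yours against; what you have written is, by your own account, a plan rather than a proof, and it does not close the conjecture.

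The concrete gaps are these. (1) Your ``engine'' is the peeling step from the sufficiency proof of \cref{thm:irred}, but that construction only establishes \emph{existence}: it adjoins the single junction $p$ and one binary (up/left) choice per enclosed leaf, and it does not enumerate all CNATs over a given leaf matrix. The anti-diagonal already shows this: the binary choices yield $2^{n-2}$ trees, while \cref{cor:totalUDCNM} gives $(n-1)!$, so for $n\ge 4$ most CNATs are not reached by independent up/left choices. Any recursion for $c(\pi)$ must therefore be built on a genuinely exhaustive decomposition, and you never write one down — the ``corrected identity'' in the decomposable case is only described as ``plausibly a sum over the CNATs of $M'$ weighted by admissible-weaving counts.'' (2) Even granting such a recursion, your proposed closing move — ``primality of $5$'' or a generic ``$c(\pi)\ge 5\Rightarrow c(\pi)\ge 6$'' step — cannot work as stated: $7$ is also prime and \cref{tab:bnk} shows $b(n,7)>0$ for $n\ge 5$, so no argument that uses only the arithmetic of the target value can single out $5$; one needs a structural classification of the achievable local contributions, which is precisely the content of the conjecture. (3) The indecomposable case ($\pi(1)=n$, $\pi(n)=1$), which you correctly identify as the hard core, is left entirely open. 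In short, the proposal identifies a reasonable reduction strategy but proves nothing beyond what \cref{thm:irred} already gives, and its two key lemmas (the exact recursion and the exclusion of the value $5$) are exactly the missing mathematics.
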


\begin{table}[H]
    \centering
    \begin{tabular}{|c|c|c|c|c|c|c|c|c|c|}
        \hline
        \diagbox{\begin{math}n\end{math}}{\begin{math}k\end{math}}  & 1 & 2 & 3 & 4 & 5 & 6 & 7 & 8 & 9   \\
         \hline
        
        2 & 1 & & & & & & & &  \\
      
        3 & 2 & 1 & & & & & & & \\
       
        4 & 4 & 4 & 1 & 3 & 0 & 1 & & &  \\
        
        5 & 8 & 12 & 4 & 14 & 0 & 6 & 2 & 7 & 0\\
        
        6 & 16 & 32 & 12 & 48 & 0 & 24 & 8 & 40 & 1\\
        
        7 & 32 & 80 & 32 & 144 & 0 & 80 & 24 & 160 & 6\\
       
        8 & 64 & 192 & 80 & 400 & 0 & 240 & 64 & 544 & 24\\
        \hline
    \end{tabular}
    \caption{The values of \begin{math}b(n,k)\end{math}}
    \label{tab:bnk}
\end{table}
Naturally, the question arises whether there are any other values \begin{math}k\end{math} other than \begin{math}k=5\end{math} such that \begin{math}b(n,k)=0\end{math} for all \begin{math}n\end{math}. In the very limited range in which we were able to compute, \begin{math}k=5\end{math} was the only such number, however there may well be more.

Sequences \begin{math}b(n,k)\end{math} for \begin{math}n\leq 7\end{math} can be found in \cite{oeis}.
\begin{math}b(n,2), b(n,4), b(n,6), b(n,7)\end{math} appear to be given by the sequences A001787, A176027, A001788, A036289 respectively.

Motivated by a comment on \cite{oeis} for A001787, we computed the binomial transform of the sequences, including one leading zero. 
Surprisingly, \begin{math}b(n,2)\end{math} transformed to 0, 1, 2, 3, 4, 5, 6\begin{math}\dots\end{math}; \begin{math}b(n,4)\end{math} transformed to 0, 3, 8, 15, 24, 35\begin{math}\dots\end{math}, which seems to be \begin{math}(n+1)^2-1\end{math}; \begin{math}b(n,6)\end{math} transformed to the square numbers 0, 1, 4, 9, 16, 25\begin{math}\dots\end{math}; \begin{math}b(n,7)\end{math} transformed to 0, 2, 4, 6, 8\begin{math}\dots\end{math}. We conjecture that these patterns hold for larger \begin{math}n\end{math}.

Interestingly, the pattern of the values of the inverse binomial transform being well-known sequences appears to break down for larger \begin{math}k\end{math}. \begin{math}b(n,8)\end{math}, for example, does not appear in \cite{oeis} and its inverse binomial transform is not an integer polynomial in \begin{math}n\end{math}. 

In a recent pre-print, \cite{sel23} have established \cref{conj:maxcnats}, \cref{conj:b(n2)} and \cref{conj:none5} and proved the binomial transform properties conjectured above for \begin{math}b(n,2)\end{math} and \begin{math}b(n,3)\end{math}. Nevertheless, many questions such as \cref{conj:detparity} remain unproven and further research is needed to fully understand the sequences \begin{math}b(n,k)\end{math}.

\acknowledgments

We are grateful to George Robinson and Sofía Marlasca Aparicio for their insightful commentary and guidance throughout our work. We also appreciate Paul E. Gunnells for his research proposal and helpful remarks. Our thanks go to the anonymous reviewers for their revisions. Finally, we would like to express our sincere gratitude to the PROMYS Europe programme, the University of Oxford and the Clay Mathematics Institute for providing the engaging mathematical environment under which we began our research.

\bibliographystyle{plainnat}
\bibliography{bibliography}
\addcontentsline{toc}{section}{References}

\end{document}